\makeatletter \@addtoreset{equation}{section}}
\newcommand{\diag}{\mathop{\mathrm{diag}}\nolimits}
\newcommand{\Ker}{\mathop{\mathrm{Ker}}\nolimits}
\newcommand{\IIm}{\mathop{\mathrm{Im}}\nolimits}
\newcommand{\Inv}[1][\,]{\mathop{\mathrm{Inv}#1}\nolimits}
\newcommand{\mcU}{{\mathcal{U}}}
\newcommand{\h}{ \mathop{ \mathrm{h} {} }\nolimits }
\newcommand{\e}{ \mathop{ \mathrm{e} {} }\nolimits }
\newcommand{\Aut}{ \mathop{ \mathrm{Aut} {} }\nolimits }
\newcommand{\beq}{\begin{equation}}
\newcommand{\eeq}{\end{equation}}
\newcommand{\LR}{\Leftrightarrow}
\newcommand{\Ra}{\Rightarrow}
\newcommand{\bpm}{\begin{pmatrix}}
\newcommand{\epm}{\end{pmatrix}}
\newtheorem{theorem}{Theorem}[section]
\newtheorem{lemma}[theorem]{Lemma}
\newtheorem{example}[theorem]{Example}
\definecolor{darkgreen}{rgb}{0,0.7,0}
 \definecolor{exp}{rgb}{0.85,0.7,0}
  \definecolor{msk}{rgb}{0.65,0.3,0.3}
\definecolor{msk2}{rgb}{0.75,0.3,0.1}
\definecolor{r}{rgb}{0.75,0.1,0.4}
\definecolor{exp2}{rgb}{0.85,0.6,0.1}
\definecolor{vvs}{rgb}{0.5,0.4,0.7}
\begin{document}

\title{Characteristic and hyperinvariant subspaces over
the field GF(2)}

\author{Pudji Astuti
\\
Faculty of Mathematics\\
 and Natural Sciences\\
Institut Teknologi Bandung\\
Bandung 40132\\
Indonesia
         \and
Harald~K. Wimmer\\
Mathematisches Institut\\
Universit\"at W\"urzburg\\
97074 W\"urzburg\\
Germany}

\date{\today}

\maketitle

\begin{abstract}

\vspace{2cm}
\noindent
{\bf Mathematical Subject Classifications (2000):}
15A18, 
47A15, 
15A57. 

\vspace{.5cm}

\noindent
 {\bf Keywords:}  hyperinvariant subspaces,  marked subspaces,
  characteristic sub\-spaces,   invariant subspaces, Jordan chains,
Ulm sequence,
exponent, height.


\vspace{1cm}

\noindent
{\bf Abstract:}
Let $f$ be an endomorphism of  a vector space  $V$
over a field $K$.
An $f$-invariant subspace $X \subseteq V$ is called
 hyperinvariant (respectively characteristic)   if
$X$ is invariant under all endomorphisms
(respectively  auto\-morphisms) that commute with $f$.
If $|K| > 2$ then all characteristic subspaces
are hyperinvariant.
If  $|K| = 2$ then there are endomorphisms $f$
with invariant subspaces that are characteristic
but not hyperinvariant.
In this paper we give a
new proof of a theorem of Shoda,  which provides
a necessary and sufficient condition for the
existence of characteristic  non-hyperinvariant
subspaces.

\vspace{3cm}
\noindent
{\bf Address for Correspondence:}\\
H. Wimmer\\
Mathematisches Institut \\
Universit\"at W\"urzburg\\
Am Hubland\\
97074 W\"urzburg\\
Germany

\vspace{.2cm}
\flushleft{
{\textsf e-mail:}~~\texttt{\small wimmer@mathematik.uni-wuerzburg.de} }\\
{\textsf Fax:}~~
+49 931 8\,88\,46\,11\\

\end{abstract}

\section{Introduction} \label{sct.int}

Let $V$ be an $n$-dimensional vector space over a field $K$ and
let  $ f : V \to V $ be  $K$-linear.
The set of  $f$-invariant subspaces of $V$ form a lattice,
which we denote by   $ {\rm{Inv}}( V) $.
In this paper we are concerned with two sublattices of $ {\rm{Inv}}( V) $.
If a  subspace $X $  of $V$     remains
invariant for all endomorphisms of $V$ that commute
with $f$ then  $X$ is  called
{\em{hyperinvariant}} for $f$ \cite[p.\ 305]{GLR}.
We say  \cite{AW4}
that a subspace  $X$ of $V$  is
{\em{characteristic}} for $f$   if
$X \in  {\rm{Inv}}( V) $ and
\,$ \alpha( X ) = X $\, for all $K$-automorphisms
$\alpha$ of $V$ that commute with $f$.
Let \,$   {\rm{Hinv (V) }}$\, and
$  {\rm{Chinv (V) }} $ be  set of hyperinvariant and
 characteristic  subspaces of  $V$, respectively.
Both sets are lattices, and  \,$   {\rm{Hinv (V) }}
 \subseteq   {\rm{Chinv (V) }}  $.
If the      the characteristic
  polynomial of $f $ splits over $K$ (such that all eigenvalues of
$f$ are in $ K$)
 then one can reduce the study
of  \,$   {\rm{Hinv (V) }} $
 and   \,${\rm{Chinv (V) }}  $ to the case where
$ f $ has only one eigenvalue, in particular to the
case where $f$ is nilpotent.
Thus, throughout  this paper we shall assume \,$ f^n = 0$.
Then  (see for example \cite[p.\,306]{GLR})
the lattice
 \,$   {\rm{Hinv (V) }} $\, is the smallest
sublattice of $ {\rm{Inv}}( V) $ that contains
\beq \label{eq.imk}
  \Ker f ^k,  \,\, \IIm f^k ,  \: k = 0, 1, \dots , n.
\eeq
It is well  known  (\cite{Kap}, \cite{Sh},  \cite{AW4})
that each characteristic subspace
is hyper\-invariant if  $|K| > 2$.
In this paper we consider vector spaces $V$ over the
field $K = GF(2) $ and we focus on   characteristic subspaces that
are not hyperinvariant.
The following example shows that in the case of $K = GF(2) $
it may occur that \,$  {\rm{Chinv (V) }} \supsetneq   {\rm{Hinv (V) }}$.
 The set of  endomorphisms, respectively  automorphisms,  of $V$
that commute with $f$ will be denoted by
  $ {\rm{End}}_f (V) $, respectively $ {\rm{Aut}}_f (V) $.

\begin{example} \label{ex.fu22}
{\rm{Let $ K =  GF(2) = \{0,1\} $.  Consider  $ V = K ^4 $
 and
\[
f= \left(
\begin{array}{c | ccc}   0 &  0 & 0   & 0
\\ \hline
 0 &  0 & 0   & 0 \\
 0 &  1 & 0  & 0 \\
              0 &    0 & 1 & 0
\end{array} \right)  .
\]
 Let   $e_1, \dots , e_4$,
be the unit vectors of $K^4$.
 Set \,$ z =  e_1 + e_3  = (1, 0, 1, 0 )^T $.
 Then $ f^2 z = 0 $. Define
 \,$X = {\rm{span}}\{z, fz\}$.
Then
\beq \label{eq.ouszb}
    X = \{0, z  , fz  , z + fz \}  = \{ 0, e_1 + e_3,  e_4 ,
 e_1 + e_3+  e_4 \} \,  \in \,  \Inv(V) .
\eeq
Note that
$  {\rm{End}}_f (V) $ consists of all matrices of the form
\beq \label{eq.cmm}
g =
\bpm  a & b & 0 & 0
\\
      0 & c & 0 & 0
\\
      0 & d & c & 0
\\
      k & h & d & c
\epm ,
\eeq
and  \,$ g \in    {\rm{Aut}}_f (V) $\,  if and only if

\beq \label{eq.alp}
g=
\bpm  1 & b & 0 & 0
\\
      0 & 1 & 0 & 0
\\
      0 & d & 1 & 0
\\
      k & h & d & 1
\epm .
\eeq
Thus, if  $ g \in    {\rm{Aut}}_f (V) $ then
\[
 gz
 =  e_1 + e_3 + (k+ d) e_4 , \;
g fz
= e_4 ,  \;
g ( z + fz)
 = e_1 + e_3 +  (k+ d + 1) e_4  .
\]
Hence $ gX \subseteq X $,  and
therefore $X \in  {\rm{Chinv}} (V)  $.
Let   $ \pi _1  = \diag (1, 0, 0, 0 ) $ be the orthogonal
projection on $ K e_1 $. Then $ \pi _1   \in
{\rm{End}}_f (V) $, and  we have  $ \pi _1  z = e_1 $,
but $ e_1 \notin X $. Therefore $X$ is not hyperinvariant.
}}
\end{example}

The example is in accordance with
the following result of Shoda \cite[Satz~5, p.\,619]{Sh},
 \cite{Kap0},   \cite[p.\,63/64]{Kap},  which we state in terms
of the Jordan normal form of $f$.

\begin{theorem}    \label{thm.vnpsa}     
Let $ V $ be a finite-dimensional vector space over
  the field  \,$ K =  GF(2) $ and let $ f : V \to V $ be nilpotent.
The following statements are equivalent.
\\
{\rm{(i)}}
There exists a characteristic subspace of $V$
 which is not hyperinvariant.
\\
{\rm{(ii)}}
For some  numbers $ r  $ and $ s $ with
   \,$s    >    r + 1  $\,
the Jordan form of $f$ contains  exactly one Jordan block of size
$s $ and exactly one block of size $r$.
\end{theorem}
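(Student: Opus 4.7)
My plan is to handle the two implications separately.

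For (ii) $\Rightarrow$ (i), I will generalise Example 1.3 constructively. Let $y$ and $w$ be cyclic generators of the unique Jordan blocks of sizes $s$ and $r$, and set
$$
z = w + f^{\,s-r-1}y, \qquad X = \mathrm{span}\{z,\,fz,\,\ldots,\,f^{\,r}z\}.
$$
Since $f^{r+1}z = f^{s}y = 0$ while $f^{r}z = f^{s-1}y \neq 0$, $z$ has exponent $r+1$, so $X$ is an $f$-invariant subspace of dimension $r+1$. To see $X \notin \mathrm{Hinv}(V)$, apply the projection $P \in \mathrm{End}_f(V)$ onto the unique $r$-chain: $Pz = w$, and the assumption $s - r - 1 \geq 1$ forces the $y$-components $f^{\,s-r-1+k}y$ of $f^{\,k}z$ for $0 \leq k \leq r$ to be linearly independent, which prevents $w$ from belonging to $X$. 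To see $X \in \mathrm{Chinv}(V)$, compute $\alpha(z) - z$ for $\alpha \in \mathrm{Aut}_f(V)$: uniqueness of both blocks over $GF(2)$ forces the leading diagonal entry of $\alpha$ on each to be $1$, so $(\alpha-I)y$ and $(\alpha-I)w$ lie in lower-exponent subspaces; a short calculation in Jordan coordinates then places $(\alpha-I)w + f^{\,s-r-1}(\alpha-I)y$ in $X$.

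For (i) $\Rightarrow$ (ii), I argue by contrapositive and assume that in the Jordan partition of $f$ no pair of singly-occurring sizes $r < s$ satisfies $s > r+1$. I want to prove $\mathrm{Chinv}(V) \subseteq \mathrm{Hinv}(V)$, the reverse inclusion being automatic. The strategy is to establish the local statement: for every $v \in V$ and every $g \in \mathrm{End}_f(V)$, the vector $g(v)$ lies in the $GF(2)$-linear span of the $\mathrm{Aut}_f(V)$-orbit of $v$. From this, any characteristic subspace containing $v$ contains $g(v)$, hence contains the whole hyperinvariant hull $\mathrm{End}_f(V)\cdot v$, giving hyperinvariance. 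The key reduction is the identity $g(v) = \alpha(v) - v$, which is available whenever one can find $\alpha \in \mathrm{Aut}_f(V)$ agreeing with $I + g$ \emph{only on the single vector} $v$ -- a much weaker condition than global invertibility of $I + g$.

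The main obstacle is producing such an $\alpha$ in the critical cases where $g$ acts as a chain projection and $I + g$ is globally singular. The partition hypothesis supplies two escape mechanisms: if some Jordan block size $m$ has multiplicity at least $2$, then an exchange automorphism swapping two $m$-chains contributes the missing flexibility; if every multiplicity equals $1$, then the singly-occurring sizes are consecutive (gap $\leq 1$), and the projection's action on $v$ can be absorbed into a polynomial-in-$f$ twist, since the obstruction $f^{|s-r|}$ is then either $I$ or $f$. A case analysis organised by how the singly-occurring sizes cluster within the partition, together with explicit verification that the required $\alpha$ exists in each configuration, completes the proof; this case analysis, and in particular the bookkeeping of exponents and heights needed to locate $\alpha$, is the hardest part of the argument.
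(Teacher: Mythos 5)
Your construction for (ii) $\Rightarrow$ (i) does not work as stated: the subspace $X=\langle z\rangle$ with $z=w+f^{\,s-r-1}y$ is in general \emph{not} characteristic. Concretely, take $r=2$, $s=4$ and $V=\langle w\rangle\oplus\langle y\rangle$ with $\e(w)=2$, $\e(y)=4$, so $z=w+fy$ and $X=\mathrm{span}\{w+fy,\ fw+f^{2}y,\ f^{3}y\}$. The map $\alpha$ with $\alpha(w)=w$, $\alpha(y)=y+fy$ lies in $\Aut_f(V)$, but $\alpha(z)=z+f^{2}y\notin X$, since the only elements of $X$ with zero $w$-component are $0$ and $f^{3}y$. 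So your claim that $(\alpha-\iota)w+f^{\,s-r-1}(\alpha-\iota)y$ always lands in $X$ fails already in the two-block case once $r\ge 2$ (it happens to hold for $r=1$, which is why Example~\ref{ex.fu22} is misleading here). A second, independent defect: when $V$ has further Jordan blocks, even for $r=1$ an automorphism such as $w\mapsto w+fg$ (with $g$ a generator of another block of exponent $\ge 2$) sends $z$ to $z+fg\notin\langle z\rangle$. The paper avoids both problems by taking the exponent-two vector $z=f^{\,r-1}w+f^{\,s-2}y$ and, more importantly, by defining the subspace intrinsically as $\langle Y\rangle$ with $Y$ given by exponent/height conditions \eqref{eq.ylamk}, so that characteristicity is automatic; the computation of Lemma~\ref{la.prtt} shows $\langle Y\rangle$ equals $\langle z\rangle$ plus the kernel layers $\langle \bar U_{[\rho+1,\tau-1]}\rangle[f]\oplus\langle\bar U_{[\tau+1,m]}\rangle[f^{2}]$ of the remaining blocks, and only then does the projection argument of Theorem~\ref{thm.mmnon} give non-hyperinvariance. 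Your non-hyperinvariance argument ($Pz=w\notin X$) is fine, but it is applied to the wrong subspace.

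For (i) $\Rightarrow$ (ii) you have a strategy, not a proof. The proposed local statement (every $g(v)$, $g\in\End_f(V)$, lies in the span of the $\Aut_f(V)$-orbit of $v$ under the contrapositive hypothesis) would indeed suffice, and your two ``escape mechanisms'' correspond in spirit to the paper's Lemma~\ref{la.nzbw}/\ref{la.schl} (writing $\iota=\beta+\gamma$ with $\beta,\gamma\in\Aut_f$ on blocks of multiplicity $\ge 2$) and to the two-consecutive-blocks analysis of Lemma~\ref{la.lemma5}. But you explicitly defer ``the case analysis \dots the hardest part,'' i.e.\ exactly the content of Lemmas~\ref{la.essntl}--\ref{la.schl} and Theorems~\ref{thm.eintl} and~\ref{thm.nfn}; in particular you never verify that the required $\alpha$ exists in the mixed configurations where a singly occurring size sits among sizes of higher multiplicity, and you make no use of (or substitute for) the marked-subspace criterion of Theorem~\ref{thm.chhym}, which is what the paper uses to convert the decomposition $X=\bigoplus_i (X\cap\langle U_{a_i}\rangle)$ into hyperinvariance. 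As it stands, both halves of the argument have genuine gaps: the first half's construction is wrong, and the second half's decisive step is missing.
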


It is the main purpose of this paper
to give a new proof  of  the implication
``(i) $\Ra$ (ii)'' of
Shoda's theorem.
In order to present a complete picture
we also include a proof of the reverse implication
``(ii) $\Ra$ (i)''.
 Theorem~\ref{thm.chhym} below
plays a key role in our new proof .
It  relates characteristic and hyperinvariant subspaces
with  marked subspaces.
Recall that an $f$-invariant subspace $W \subseteq V $, $ W \ne 0 $,
is said to be  {\em{marked}}  \cite[p.\,83]{GLR} if it has a
Jordan basis (with respect to $f_{|W}$) that can be extended
to a Jordan basis of $V$. The zero subspace 
 is assumed to be marked.

\begin{theorem} {\rm{\cite[p.\,268]{AW4}}} \label{thm.chhym}
Let $W \in \Inv(V)$. Then $ W $ is a hyperinvariant subspace if and only if
$W$ is characteristic and marked.
\end{theorem}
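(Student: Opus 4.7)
The plan is to prove the two implications of the biconditional separately.

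The forward implication (hyperinvariant $\Rightarrow$ characteristic and marked) is the easier one. Characteristicity is immediate from $\Aut_f(V) \subseteq \End_f(V)$. For markedness, I would use the description~\eqref{eq.imk}: the hyperinvariant lattice is generated by the $\Ker f^k$ and $\IIm f^k$. Fix any Jordan basis of $V$. In each Jordan chain of $V$, both $\Ker f^k$ and $\IIm f^k$ cut out a tail, i.e.\ they are spanned by a \emph{chain-saturated} subset of the basis — a subset closed under the action of $f$. Chain-saturated subsets are closed under union and intersection, so every hyperinvariant subspace is spanned by a chain-saturated subset of the fixed Jordan basis, and such a subset is automatically a Jordan basis of the subspace extending the given one. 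Hence every hyperinvariant subspace is marked.

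For the reverse implication, suppose $W$ is characteristic and marked. Fix a Jordan basis of $V$, with chain $i$ of length $L_i$, that extends a Jordan basis of $W$; then $W$ consists of the bottom $\ell_i$ vectors of each chain $i$, for some integers $0 \le \ell_i \le L_i$. My plan is to apply the characteristic hypothesis to explicit unipotent automorphisms to force the combinatorial inequalities on $(\ell_i)$ that characterize hyperinvariance. Concretely: whenever $L_i \ge L_j$, the ``downward shift'' $N$ sending the basis of chain $j$ to the same-depth vectors of chain $i$ (and zero elsewhere) lies in $\End_f(V)$, and $I + N \in \Aut_f(V)$; the invariance $(I+N)(W) = W$ immediately yields $\ell_j \le \ell_i$. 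Dually, the ``upward shift'' sending the top $L_j$ vectors of chain $i$ onto chain $j$ produces an $f$-commuting automorphism that forces $L_i - \ell_i \ge L_j - \ell_j$. These two inequality families are precisely equivalent to $W$ being expressible as a lattice combination of the subspaces $\Ker f^a \cap \IIm f^b$, i.e.\ to $W$ being hyperinvariant.

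The main obstacle is the final step — translating the inequalities on the tail profile $(\ell_i)$ into an explicit expression of $W$ as a sum of intersections $\Ker f^a \cap \IIm f^b$. This is a bookkeeping argument in the Jordan basis rather than a conceptually deep one, but it must be set up carefully so that every hyperinvariant generator appears naturally. A pleasant feature of this outline is that it is field-independent: the shift automorphisms used exist over $GF(2)$ as well as over larger fields, so $GF(2)$ plays no distinguished role in Theorem~\ref{thm.chhym}. Its peculiarity surfaces only in Theorem~\ref{thm.vnpsa}, where characteristic subspaces that are \emph{not} marked can exist.
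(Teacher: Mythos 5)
Your proposal is correct, but there is no in-text argument to compare it with: the paper states Theorem~\ref{thm.chhym} as a quoted result from \cite{AW4} and never proves it here, so your proof should be judged against the toolkit the paper supplies, and it fits that toolkit well. The forward half is exactly what \eqref{eq.imk} suggests: in any fixed Jordan basis, $\Ker f^k$ and $\IIm f^k$ are spanned by a tail of each chain, tails survive the sums and intersections that generate ${\rm{Hinv}}(V)$, and a ``tail of each chain'' sub-basis is a Jordan basis of the subspace extending the given one; characteristicity is trivial. Your reverse half, via the unipotent shifts $\iota+N$, is in substance a re-proof of the implication (i)$\Rightarrow$(ii) of the paper's Theorem~\ref{thm.qv}: markedness puts $W$ in the form $W(r,U)$ of \eqref{eq.wru}, your two families of shift automorphisms are legitimate elements of $\Aut_f(V)$ (each $N$ maps one chain into another and is square-zero, and the exponent comparisons make them commute with $f$), and invariance of $W$ under them forces precisely the inequalities \eqref{eq.r12}. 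The step you leave as ``bookkeeping'' is exactly (ii)$\Rightarrow$(iii) of Theorem~\ref{thm.qv}, so inside this paper you could simply cite it; if you want the argument self-contained, it closes quickly: order the chains so that $t_1\le\cdots\le t_k$; then \eqref{eq.r12} gives
\[
W(r,U)=\sum_{i=1}^{k}\bigl(f^{r_i}V\cap V[f^{\,t_i-r_i}]\bigr),
\]
where $W(r,U)$ is contained in the right-hand side because $f^{r_i}u_i$ lies in the $i$-th summand, and each summand lies in $W(r,U)$ by a chainwise check: on $\langle u_j\rangle$ the $i$-th summand consists of vectors of height at least $\max(r_i,\;r_i+t_j-t_i)\ge r_j$, the cases $j\le i$ and $j>i$ using the first and second chains of inequalities in \eqref{eq.r12} respectively. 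Hence $W$ is a sum of intersections of the generators \eqref{eq.imk} and is hyperinvariant. Your closing remark is also accurate: the theorem is field-independent, and $GF(2)$ is special only because there characteristic subspaces need not be marked, which is the content of Theorem~\ref{thm.vnpsa}.
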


\medskip

The proof of  Theorem~\ref{thm.vnpsa} will be divided into several parts.
It relies on results on characteristic subspaces in
Section~\ref{sct.chrst}.
Auxiliary material
and basic facts  on generator tuples and marked
subspaces  are discussed  in Section~\ref{sct.axlr}.

\section{Definitions and notation} \label{sct.dfn}

We set $  V[ f^j ] =  \Ker f^j $, $j \ge 0 $.
Clearly,  $ f^n = 0 $ implies $ V =  V[ f^n]$.
Define $ \iota = {\rm{id}}_V $ and $ f^0 =  \iota$.
Let $ x \in V $. The smallest nonnegative integer $\ell$
 with
$f^{\ell} x = 0$
is  called
the {\em{exponent}} of $x$. We write  $\e(x) = \ell$.
A  nonzero vector  $x $
  is said to have \emph{height} $q $ if $x \in f^q V$
and $x \notin f^{q+1} V$.
In this case we write $\h(x) = q$. We set $ \h ( 0 ) =  \infty $.
Let $ Y \subseteq V$. We write $ \e(Y) = s $
 if $ \e(y) = s $ for all $y \in Y $.
Moreover,  $ \h(Y)  = q $ shall mean $ Y \subseteq f^qV $ together
with  $ Y \nsubseteq f^{q+1}V $.
Let
\[
 \langle x  \rangle \,  = \,  {\rm{span}} \{ f^i x , \, i \ge 0 \}
\]
 be  the  cyclic subspace generated by $ x $.
If \,$ B  \subseteq V$\, we define
 \,$  \langle B \rangle  =
 \sum _{b \, \in \,  B } \,  \langle \,  b  \, \rangle  $.
We   call  \,$ U = ( u_1, \dots , u_k) $\,   a
{\em{generator tuple}} of $V$  if
\[  V =
 \langle u_1 \rangle
\,  \oplus \,
\cdots \,
 \oplus    \langle u_{ k } \rangle
\]
 and if  the elements of $U$ are ordered
 such that
\beq \label{eq.rodr}
 t_1 \,  = \,  \e( u_1 ) \,\,   \le \,\,  \cdots \,\,  \le \,\,
\e( u_k )  \,  = \, t_k  .
\eeq
Let  $ \mathcal{U} $ be the set of   generator tuples  of $V$.
Thus
\beq   \label{eq.elt}
 s^{ t_{1}}, \,
\dots , \,  s ^{t_{k} } ,  \, \,\,
 0 < t_{1}  \, \le  \, \cdots  \,   \le  \,  t_{k},
\eeq
are the elementary divisors of $f$, and
\,$ t_1 + \cdots + t_k = \dim V = n $\, and \,$ \dim \Ker f = k $.
Set
\beq \label{eq.npjo}
N_{t}  = \bpm  0 &  &  &  &  &   &
\\
1 & 0  &  &  &  &                &
\\
 & 1 &  0 &  &  &                 &
\\
 &  &   \ddots   & \ddots  &   &  &
\\
 &  &  &  &     &  0                   &
\\
&   &  &  &  & 1 & 0
\\
\epm _{t \times t} .
\eeq
Then  \,$J = \diag(N_{t_1}, \dots , N_{t_k} ) $\, is the
Jordan form of $f$.
We define
\[
d(r) =
\dim \big(  V[f] \cap
  f ^{r -1} V \,\,  /  \, \, V[f]  \cap  f ^{r}  V \big),
\:\:
 r = 1, 2,  \dots , n.
\]
 Using the terminology of
  abelian $p$-groups \cite{FuI}  or $p$-modules
\cite[p.27]{Kap}
we call $ d(r) $ the $r-1$-th   {\em{Ulm invariant}}
and
\beq \label{eq.ulmsq}
D = \big(  d(1) , \dots , d(n) \big)
\eeq
the  {\em{Ulm sequence}} of the pair $ (V, f) $.
Then $   d (r) $ is equal to  the number of Jordan blocks
of size $r  $ in the Jordan form $J$ of $f$.
  If $V  = \langle x \rangle $, $ \e(x) = n $,
then $  D = (0, \dots, 0 , 1)$. Thus,
 if $ V[f] = f^{a-1} V $ then
$  D = (0, \dots, 0 , n/a, 0,  \dots , 0)$.
\medskip

Let  \,$ U = ( u_1, \dots , u_k) \in \mcU $.
It will be convenient to partition  $ U $ into
subsets of equal exponent.
Denote the distinct elements of
\,$\{\e(u_r); \, 1 \,\le \, r \, \le  \, k\}$\,
 by \,$a_1, a_2, \dots, a_m$\,
labelled so that \,$a_1 < \cdots < a_m$\,
and  set
\[
 U_{ a_{\mu } } = \{ u \in U ; \:  \e ( u )  =  a_{\mu} \} ,
\; \mu = 1, \dots , m .
\]
Then
\beq \label{eq.prtt}
 U  =
\bigl( U_{a_1}, \dots , U_{a_m} \bigr)
\quad {\rm{and}} \quad
\e( U_{ a_{1} } ) =  a_{1}\, < \,\cdots \,
 < \, \e( U_{ a_{m} } )  = a_m ,
\eeq
and \,$ | U_{ a_{\mu} } | = d( a_{\mu}) $.
To \eqref{eq.prtt} corresponds the
decomposition
\beq \label{eq.fvp}
 V \,  = \,  \langle U_{a  _{1 }}  \rangle \,
\oplus \,  \cdots \, \oplus \,
  \langle U_{a _{m }}  \rangle.
\eeq
Let
\,$ \pi _{\mu} : V \to V $\, be the projection
with
\[
\pi _{\mu}  V =  \langle U_{a_{\mu}}    \rangle ,
\;\:
\Ker \pi _{\mu} = \langle U_{a_1}, \dots ,
 U_{a_{\mu -1}} ,  U_{a_{\mu +1}}, \dots , U_{a_m}  \rangle .
\]
Note that $  \pi _{\mu} \in  {\rm{End}}_f(V) $.

\section{Auxiliary results} \label{sct.axlr}

\subsection{Automorphisms and generators}

The following lemma shows that
   each $\alpha \in  {\rm{Aut}}_f(V) $ is uniquely determined
by the image of a given  generator tuple.

\begin{lemma} {\rm{\cite{AW4}}}  \label{la.ausoam}
Let    \,$ U = ( u_1, \dots , u_k)
 \in \mathcal{U} $  be given.
For \,$    \alpha \in  {\rm{Aut}}_f(V) $\, define
\,$
\Theta_U( \alpha)    = \bigl( \alpha ( u_1 ) ,
  \dots ,  \alpha ( u_k  ) \bigr)$.
Then
\[  \alpha \mapsto  \Theta  _U  ( \alpha) , \,\,
 \Theta _U :  {\rm{Aut}}_f(V) \to  \mathcal{U} ,
\]
is a bijection.
\end{lemma}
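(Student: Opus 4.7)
The plan is to establish injectivity and surjectivity of $\Theta_U$ separately, using the fact that an element of $\mathrm{End}_f(V)$ is fully determined by the images of any generating set of the $K[f]$-module $V$, and that any two generator tuples have the same exponent sequence.

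For injectivity, suppose $\alpha,\beta \in \mathrm{Aut}_f(V)$ satisfy $\Theta_U(\alpha) = \Theta_U(\beta)$, i.e.\ $\alpha(u_i)=\beta(u_i)$ for $i=1,\dots,k$. Since $\alpha$ and $\beta$ both commute with $f$, one has $\alpha(f^j u_i) = f^j \alpha(u_i) = f^j \beta(u_i) = \beta(f^j u_i)$ for every $j \ge 0$. Because the vectors $\{f^j u_i : 0 \le j < t_i,\, 1 \le i \le k\}$ form a $K$-basis of $V$ (by the direct sum decomposition $V = \langle u_1\rangle \oplus \cdots \oplus \langle u_k\rangle$), we conclude $\alpha = \beta$.

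For surjectivity, let $U' = (u'_1,\dots,u'_k) \in \mathcal{U}$ be arbitrary. The elementary divisors of $f$ are invariants of the pair $(V,f)$, so both $U$ and $U'$ yield the same sequence \eqref{eq.elt}; together with the ordering convention \eqref{eq.rodr} this forces $\e(u'_i) = \e(u_i) = t_i$ for each $i$. The vectors $\{f^j u'_i : 0 \le j < t_i,\, 1 \le i \le k\}$ therefore also form a $K$-basis of $V$. Define a $K$-linear map $\alpha \colon V \to V$ on basis elements by
\[
\alpha(f^j u_i) = f^j u'_i, \qquad 0 \le j < t_i,\ 1 \le i \le k .
\]
Since $\alpha$ sends a basis to a basis, it is a $K$-automorphism of $V$. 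To verify $\alpha f = f\alpha$ it suffices to check the relation on the basis: for $0 \le j < t_i - 1$ both sides give $f^{j+1} u'_i$, and for $j = t_i - 1$ both sides give $0$ (since $f(f^j u_i) = 0$ because $\e(u_i)=t_i$, and $f^{t_i} u'_i = 0$ because $\e(u'_i)=t_i$). Hence $\alpha \in \mathrm{Aut}_f(V)$, and by construction $\Theta_U(\alpha) = U'$.

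There is no real obstacle here: the argument is a direct application of the universal property of cyclic $K[f]$-modules. The only point that needs care is the matching of exponents $\e(u_i)=\e(u'_i)$, which ensures that the prescription $\alpha(f^j u_i) = f^j u'_i$ is consistent at the top of each Jordan chain (both sides vanish simultaneously), so that $\alpha$ is well defined and commutes with $f$.
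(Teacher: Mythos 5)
Your proof is correct. Note that the paper does not prove Lemma~\ref{la.ausoam} at all --- it is imported from \cite{AW4} --- so there is no in-paper argument to compare with; your injectivity/surjectivity argument, using that $\{f^j u_i:\ 0\le j<t_i,\ 1\le i\le k\}$ is a basis and that any $U'\in\mathcal{U}$ has the same ordered exponent sequence $t_1\le\cdots\le t_k$, is the standard proof and is complete on those two counts. The only point you leave tacit is that $\Theta_U$ really maps into $\mathcal{U}$: for $\alpha\in\Aut_f(V)$ one should observe that $\alpha$ preserves exponents (since $f^j\alpha(u_i)=\alpha(f^j u_i)$ and $\alpha$ is injective) and carries $V=\langle u_1\rangle\oplus\cdots\oplus\langle u_k\rangle$ onto $V=\langle \alpha(u_1)\rangle\oplus\cdots\oplus\langle \alpha(u_k)\rangle$, so that $\bigl(\alpha(u_1),\dots,\alpha(u_k)\bigr)$ is again a generator tuple satisfying \eqref{eq.rodr}; this is immediate, but it is needed before one can speak of a bijection onto $\mathcal{U}$.
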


\medskip


The next  lemma will enable us to
exchange  vectors
 in a  generator tuple.

\begin{lemma}  \label{la.pd13}
Suppose
$ V = \langle u_1 \rangle  \oplus
\cdots  \oplus
 \langle u_n  \rangle $ and
$ \e(u_i) = a $, $ i = 1, \dots , n$.
If $ x \in V$, $ x \ne 0 $, and $ \h(x) = 0 $, then there exist an
index $j$ such that
\beq \label{eq.uu}
 (u_1, \dots , u_{j-1}, x ,  u_{j+1}, \dots , u_n )  \in \mcU.
\eeq
\end{lemma}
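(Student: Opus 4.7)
The plan is to expand $x$ in the given cyclic decomposition, extract a distinguished coordinate from the hypothesis $\h(x)=0$, and then run a standard change-of-generator argument using the fact that a polynomial with nonzero constant term is a unit modulo $t^a$.

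First I would write $x = \sum_{i=1}^n p_i(f)\, u_i$ with each $p_i \in K[t]$ of degree less than $a$; this representation is unique because $u_i, f u_i, \dots, f^{a-1} u_i$ are linearly independent in $\langle u_i \rangle$ for every $i$. The assumption $\h(x)=0$, i.e.\ $x \notin fV$, forces at least one constant term $p_j(0)$ to be nonzero, for otherwise every summand $p_i(f)u_i$ would lie in $f\langle u_i \rangle$ and then $x \in fV$. Fix any such index $j$, and note as a by-product that $\e(x) = a$: projecting $f^{a-1}x$ onto $\langle u_j \rangle$ gives $p_j(0)\, f^{a-1} u_j \ne 0$, while $f^a V = 0$ yields $f^a x = 0$.

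Set $W = \bigoplus_{i \ne j} \langle u_i \rangle$, so $V = W \oplus \langle u_j \rangle$. I claim that $V = W \oplus \langle x \rangle$. The key step is showing $W \cap \langle x \rangle = 0$. Suppose $g(f) x \in W$ for some polynomial $g \in K[t]$. Projecting onto $\langle u_j \rangle$ using $V = W \oplus \langle u_j\rangle$ produces $g(f) p_j(f) u_j = 0$, which means $t^a \mid g(t) p_j(t)$ in $K[t]$, since $\e(u_j) = a$. Because $p_j(0) \ne 0$, the polynomial $p_j(t)$ is coprime to $t^a$, hence $t^a \mid g(t)$, and then $g(f)$ annihilates all of $V$, so in particular $g(f) x = 0$. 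A dimension count $\dim W + \dim \langle x \rangle = (n-1)a + a = \dim V$ then gives $V = W \oplus \langle x \rangle$, which is \eqref{eq.uu}; the ordering condition \eqref{eq.rodr} for a generator tuple is automatic because every exponent equals $a$.

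The only conceptual obstacle is the observation that a polynomial $p_j$ with $p_j(0) \ne 0$ is coprime to $t^a$; once that invertibility is in hand, the remainder of the argument is a direct coordinate projection combined with a dimension count. An alternative, essentially equivalent, formulation would be to define $\phi \in \End_f(V)$ by $\phi(u_i) = u_i$ for $i \ne j$ and $\phi(u_j) = x$ (well defined since $\e(x) \le a = \e(u_j)$), and show that $\phi$ is surjective by exhibiting $u_j$ as $q_j(f)(p_j(f) u_j) \bmod W$ with $q_j p_j \equiv 1 \pmod{t^a}$; but the intersection-and-dimension route above is the most economical.
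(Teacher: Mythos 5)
Your proof is correct and rests on the same key observation as the paper's: since $\h(x)=0$, some component of $x$ in $\langle u_j\rangle$ has nonzero constant coefficient, and this coefficient makes the relevant operator invertible --- your coprimality of $p_j(t)$ with $t^a$ is exactly the paper's nonsingularity of the lower triangular Toeplitz matrix $C_j$. The only (inessential) difference is in how the conclusion is packaged: the paper solves explicitly for the Jordan chain of $u_j$ in terms of the chain of $x$ and the remaining chains to exhibit a new Jordan basis, whereas you verify $W\cap\langle x\rangle=0$ and finish with a dimension count; both are valid.
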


\begin{proof}
Let
$ x = x_1 + \cdots + x_k $, $ x_i \in  \langle u_i \rangle $.
Then
\[
 x_i = \sum _{\nu = 0} ^{a-1 } c_{  i \nu  } f ^{\nu } u_i
=
   ( u_i, f u_i \dots , f^{a-1 } u_i ) \begin{pmatrix}
 c_{i 0 } \\  c_{i 1 } \\ \vdots \\ c_{i, a -1 }
\end{pmatrix} .
\]
Set
\[
   C_i =
\begin{pmatrix}
  c_{i 0 }         &               &                  &        &
\\
 c_{i 1 }          &   c_{i 0 }    &                 &         &
\\
 c_{i 2 }          &   c_{i 1 }    &  c_{i 0 }       &       &
\\
.                  &     .         &        .        &  .     &
\\
c_{i, a -1 }       &   c_{i, a -2 }&   c_{i, a -3 }  & \dots &  c_{i 0 }
 \end{pmatrix} .
\]
Then
\[
 ( x_i, f x_i \dots , f^{a-1 } x_i ) =
 ( u_i, f u_i \dots , f^{a-1 } u_i ) C_i ,
\]
and
\[
   ( x, f x \dots , f^{a-1 } x ) =
\sum _{i = 1} ^n
         ( u_i, f u_i \dots , f^{a-1 } u_i ) C_i .
\]
Because of
$ \h(x ) = 0 $ we have   $ \h(x_j) = 0 $
 for some $j$.  Thus  $ c_{j0} \ne 0 $,
and
$ C_j $ is nonsingular.
We obtain
\begin{multline} \label{eq.ubf}
      ( u_j, f u_j \dots , f^{a-1 } u_j )
=
\\
  ( x, f x \dots , f^{a-1 } x ) C_j ^{-1}
-
\sum _{i \ne j}   ( u_i, f u_i \dots , f^{a-1 } u_i ) C_i  C_j ^{-1}.
\end{multline}
The vectors
\beq \label{eq.jb}
  B
 =
(  u_1, f u_1 \dots , f^{a-1 } u_1 , \dots ,
 u_k, f u_k \dots , f^{a-1 } u_n )
\eeq
are a Jordan basis of $V$.
Because of \eqref{eq.ubf} we obtain another Jordan basis if
we replace the vectors
  $ ( u_j, f u_j \dots , f^{a-1 } u_j ) $
in $B$   by
$   ( x, f x \dots , f^{a-1 } x ) $.
This proves \eqref{eq.uu}.
\end{proof}

\medskip

In  the proof of Theorem~\ref{thm.lgsth}
we shall use the following observation.

\begin{lemma} \label{la.mma8}
Let    \,$U= (U_{a_1}, \dots, U_{a_m}) \in \mcU$.
Suppose  \,$i <j$\, and let
 $w \in  U_{a_i}$,  $y \in  U_{a_j} $.
Then there exists $\alpha \in  {\rm{Aut}}_f(V) $ such
that
$ \alpha y = w + y$.
\end{lemma}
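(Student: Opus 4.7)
The plan is to reduce this to Lemma~\ref{la.ausoam}: if we can show that the tuple obtained from $U$ by replacing $y$ with $w+y$ is still an element of $\mcU$, then Lemma~\ref{la.ausoam} immediately produces an $\alpha \in \Aut_f(V)$ with $\alpha(y) = w+y$ (and fixing every other entry of $U$).

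More concretely, write $y = u_{j_0}$ (so $\e(u_{j_0}) = a_j$) and $w = u_{s_0}$ for some $s_0 \ne j_0$ with $\e(u_{s_0}) = a_i$, and set
\[
U' \, = \, (u_1, \dots, u_{j_0 - 1},\, w + y, \, u_{j_0 + 1}, \dots , u_k).
\]
First I would verify the exponent condition \eqref{eq.rodr}. Since $a_i < a_j$, one has $f^{a_j}(w+y) = 0$ and $f^{a_j - 1}(w+y) = f^{a_j - 1}y \ne 0$ (because $f^{a_j - 1}w = 0$), so $\e(w+y) = a_j = \e(y)$. Thus $U'$ has the same exponent sequence as $U$ and is correctly ordered.

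Next I would check that $V = \langle u_1 \rangle \oplus \cdots \oplus \langle w+y\rangle \oplus \cdots \oplus \langle u_k \rangle$. Since $\dim \langle w+y \rangle = a_j = \dim \langle y \rangle$, the sum of the cyclic subspaces generated by the entries of $U'$ has total dimension $n$. So it suffices to show that this sum equals $V$. But $w = u_{s_0}$ lies in $\sum_{r \ne j_0} \langle u_r \rangle$, and therefore $y = (w+y) + (-w)$ (or $(w+y) + w$ in characteristic two) lies in $\langle w+y \rangle + \sum_{r \ne j_0} \langle u_r \rangle$. Hence every generator of the original decomposition \,$V = \bigoplus_r \langle u_r \rangle$\, lies in $\sum_r \langle u'_r \rangle$, proving equality of the two subspaces. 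The directness then follows from the dimension count, so $U' \in \mcU$.

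Finally, Lemma~\ref{la.ausoam} supplies a unique $\alpha \in \Aut_f(V)$ with $\Theta_U(\alpha) = U'$; in particular $\alpha(y) = w+y$, which is the claim. There is no real obstacle here beyond the exponent computation for $w+y$, which is the only place where the assumption $i < j$ is used.
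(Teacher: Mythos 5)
Your proof is correct and follows essentially the same route as the paper: replace $y$ by $w+y$ in the generator tuple, check that the modified tuple is still in $\mcU$ (using $a_i < a_j$ to see $\e(w+y)=a_j$), and then invoke Lemma~\ref{la.ausoam} to produce the automorphism. The only difference is that you spell out the direct-sum verification (spanning plus dimension count) in more detail than the paper does, which is harmless.
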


\begin{proof}
From  \,$ y, w \in U $\,
follows \,$ \h(y + w) = 0$, and
\,$ \e( U_{a_i} ) < \e (  U_{a_j} ) $\,
implies \,$\h(f^{a_j -1} ( y+ w)) = a_j -1$ and
$ \e ( y+ w) = a_j $.
We can assume $  U_{a_j} = (y, y_2, \dots , y_r) $.
Set $ \tilde{U}_{a_j} = (y + w, y_2, \dots , y_r) $.
If we
replace $  U_{a_j} $ in $\mcU $ by $ \tilde{U}_{a_j} $
we obtain another generator tuple $\tilde{U} \in \mcU$.
Then Lemma~\ref{la.ausoam}  yields the desired automorphism.
\end{proof}

\subsection{Marked subspaces}

Marked subspaces can be traced back to \cite[p.\,83]{GLR}.
They
have  been studied in  \cite{Bru}, \cite{FPP}, \cite{AW3},
and         \cite{CFP}.
Let    $ s^{t_i} $, $0 < t_1 \le \cdots \le t_k$,
be the elementary divisors of $f$.
We say that a $k$-tuple
 $r = (r_1, \dots , r_k) $  of
integers  is {\em{admissible}}   if
\beq \label{eq.adr}
 0  \, \leq \, r_i \, \leq \, t_i, \,\,\, i = 1, \dots , k.
\eeq
Each   $U \in  \mathcal{U}$   together with an
 admissible tuple $r$ gives rise
to a subspace
\beq \label{eq.wru}
W(r, U)
=
 \langle f ^{r_1}  u_1 \rangle  \,  \oplus  \,  \cdots  \,   \oplus
 \,  \langle f ^{r_k}  u_k \rangle  ,
\eeq
which is marked  in $V$. Conversely,
 a subspace $W $ is marked in $V$
only if $ W = W(r, U) $ for some  $U \in  \mathcal{U}$
   and some admissible  $ r$.
The next theorem describes those
subspaces  $ W(r, U) $ that are independent of the
generator tuple $U$.

\bigskip
\begin{theorem} {\rm{(\cite{AW4}, \cite[p.\ 162]{Lo})}} \label{thm.qv}
Let \,$ U \in \mathcal{U} $\,
be given as in  \eqref{eq.rodr},
  and let $r  = (r_1, \dots , r_k) $\,
be admissible. Then the following statements are equivalent.
\begin{itemize}
\item[\rm{(i)}] The subspace  $  W(r, U) $ is characteristic.
\item[\rm{(ii)}]
The tuples $t = ( t_1, \dots , t_k)$ and  \,$r = (r_1, \dots , r_k) $\,
satisfy
  \beq  \label{eq.r12}
 r_1  \leq  \cdots  \leq  r_k \quad and \quad
  t_1 - r_1   \leq   \cdots   \leq  t_k - r_k .
\eeq
\item[\rm{(iii)}] The subspace  $  W(r, U) $ is hyperinvariant.
\end{itemize}
\end{theorem}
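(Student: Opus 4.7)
Since every hyperinvariant subspace is characteristic, the implication (iii)$\Rightarrow$(i) is immediate, so it suffices to prove (i)$\Rightarrow$(ii)$\Rightarrow$(iii). I plan to handle (ii)$\Rightarrow$(iii) by an explicit lattice formula and (i)$\Rightarrow$(ii) by contraposition, producing the required automorphism via Lemma~\ref{la.ausoam}.

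For (ii)$\Rightarrow$(iii), I intend to verify the identity
\[
W(r,U) \; = \; \sum_{i=1}^{k} \bigl( \Ker f^{\,t_i - r_i} \, \cap \, \IIm f^{\,r_i} \bigr).
\]
Each term on the right is a lattice combination of the generators in \eqref{eq.imk}, hence hyperinvariant, and the lattice of hyperinvariant subspaces is closed under sums. To check the identity I decompose along $V = \bigoplus_{\ell} \langle u_\ell\rangle$: the restriction of the $i$-th summand to $\langle u_\ell\rangle$ is $\langle f^{\max\{r_i,\, r_i + t_\ell - t_i\}} u_\ell\rangle$. The conditions $r_1 \le \cdots \le r_k$ and $t_1 - r_1 \le \cdots \le t_k - r_k$ ensure this exponent is always at least $r_\ell$, with equality when $i = \ell$. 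Consequently the sum of the restrictions collapses to $\langle f^{r_\ell} u_\ell\rangle$, and summing over $\ell$ recovers $W(r,U)$.

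For (i)$\Rightarrow$(ii), I argue by contraposition. Suppose (ii) fails. If I can exhibit $U' \in \mcU$ with $W(r, U') \ne W(r,U)$, then by Lemma~\ref{la.ausoam} the unique $\alpha \in \Aut_f(V)$ with $\alpha u_i = u'_i$ sends $\langle f^{r_i} u_i\rangle$ to $\langle f^{r_i} u'_i\rangle$, and hence $\alpha W(r,U) = W(r, U') \ne W(r,U)$, violating that $W(r,U)$ is characteristic. I split into cases. If $r_i > r_j$ for some $i<j$ with $t_i = t_j$, I transpose $u_i$ and $u_j$; the resulting tuple lies in $\mcU$ (the exponents in positions $i,\dots,j$ are all equal by monotonicity) and $W(r,U')$ contains $f^{r_j} u_i$, which escapes $\langle f^{r_i} u_i\rangle$ since $r_j < r_i$. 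If $r_i > r_j$ with $t_i < t_j$, Lemma~\ref{la.mma8} furnishes $\alpha$ with $\alpha u_j = u_i + u_j$ (and $\alpha$ fixing the other generators); then $\alpha(f^{r_j} u_j) = f^{r_j}u_i + f^{r_j}u_j$, whose $\langle u_i\rangle$-component $f^{r_j} u_i$ lies outside $\langle f^{r_i} u_i\rangle$. The remaining case is that $r$ is monotone but $t_i - r_i > t_j - r_j$ for some $i < j$. Here I set $u'_i = u_i + f^{t_j - t_i} u_j$ and leave the other generators fixed; one checks $\e(u'_i) = t_i$ (the leading terms in $\langle u_i\rangle$ and $\langle u_j\rangle$ do not cancel) and that the new tuple is in $\mcU$ (since $u_i = u'_i - f^{t_j - t_i} u_j$ lies in $\langle u'_i\rangle + \langle u_j\rangle$). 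Then $f^{r_i} u'_i$ has $\langle u_j\rangle$-component $f^{r_i + t_j - t_i} u_j$; the failure of (ii) yields the strict inequality $r_i + t_j - t_i < r_j$, while $r_i < t_i$ (forced by $t_i - r_i > 0$) gives $r_i + t_j - t_i < t_j$, so this component is a nonzero element of $\langle u_j\rangle$ outside $\langle f^{r_j} u_j\rangle$, hence $W(r, U') \ne W(r, U)$.

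\textbf{Expected obstacle.} The bulk of the work lies in (i)$\Rightarrow$(ii). The delicate point is to ensure, in each of the three cases, that the proposed $U'$ truly belongs to $\mcU$—the replacement vector must have exactly the prescribed exponent and the direct-sum decomposition must be preserved—while simultaneously producing a component in some $\langle u_\ell\rangle$ that demonstrably escapes $\langle f^{r_\ell} u_\ell\rangle$. The strict inequality $r_i + t_j - t_i < r_j$ extracted from the failure of the second clause of (ii) is the precise input that keeps the offending component both nonzero and outside; any loss of strictness would break the argument.
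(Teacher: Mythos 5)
Your proposal is correct. Note, however, that the paper does not prove Theorem~\ref{thm.qv} at all: it is quoted from \cite{AW4} and \cite[p.\,162]{Lo}, so there is no in-paper argument to compare against; your proof is judged on its own. It holds up. The implication (iii)$\Rightarrow$(i) is indeed immediate. Your identity $W(r,U)=\sum_{i}\bigl(\Ker f^{\,t_i-r_i}\cap \IIm f^{\,r_i}\bigr)$ is verified correctly componentwise along $V=\langle u_1\rangle\oplus\cdots\oplus\langle u_k\rangle$: the $\ell$-th component of the $i$-th summand is $\langle f^{\max\{r_i,\;r_i+t_\ell-t_i\}}u_\ell\rangle$, and \eqref{eq.r12} makes this exponent $\ge r_\ell$ with equality at $i=\ell$, so the sum collapses to $W(r,U)$; since the right-hand side is a lattice expression in the subspaces \eqref{eq.imk}, it is hyperinvariant. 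The contrapositive of (i)$\Rightarrow$(ii) is also sound: in each case the modified tuple $U'$ is genuinely in $\mcU$ (the replacement vector has the required exponent because its components lie in distinct cyclic summands, and spanning together with $\sum_\ell \e(u'_\ell)=n$ forces the sum to remain direct), Lemma~\ref{la.ausoam} converts $U'$ into an $\alpha\in\Aut_f(V)$ with $\alpha W(r,U)=W(r,U')$, and the strict inequalities you isolate ($r_j<r_i\le t_i$ in the first two cases; $r_i+t_j-t_i<r_j$ and $r_i<t_i$, the latter from $t_i-r_i>t_j-r_j\ge 0$, in the third) keep the offending component nonzero and outside the corresponding $\langle f^{r_\ell}u_\ell\rangle$. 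Two minor remarks: in your second case, Lemma~\ref{la.mma8} as stated only supplies $\alpha$ with $\alpha u_j=u_i+u_j$ (that it fixes the remaining generators is visible only in its proof), but your argument never actually uses that extra property, so nothing is lost; and your first two cases could be merged by replacing $u_j$ with $u_i+u_j$, which has exponent $t_j$ whether or not $t_i=t_j$, making the transposition argument unnecessary.
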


Note that  \eqref{eq.r12}
implies that  $r_i = r_j $ if $ t_i = t_j$.
Let $ U = (U_{a_1}, \dots , U_{a_m} )$, $ a_1 < a_2 < \cdots < a_m$.
Hence if $ X $ is characteristic then $X $ is marked
if and only if
\beq \label{eq.rpcs}
 X =  f^{c _1}   \langle  U_{a_1} \rangle
\oplus \cdots \oplus
 f^{c_m}    \langle U_{a_m}  \rangle
\eeq
with \,$ 0 \le c_i \le a_i $, $ i = 1, \dots , m$, and
\beq \label{eq.cugl}
  c_1 \le c_2 \le \cdots \le c_m, \quad {\rm{and}} \quad
a_1 - c_1 \le a_2 - c_2 \le  \cdots \le a_m -  c_m .
\eeq

It is known that marked subspaces can be characterized
in a basis free manner.

\begin{theorem} {\rm{(see \cite{AW3})}} \label{thm.vlski}
A subspace $W \in \Inv (V) $  is marked if and only if
\[
 f ^s W \cap f^{s +r}V = f^s (W \cap f^r V)
\]
for all $s \ge 0$, $ r \ge 0 $.
\end{theorem}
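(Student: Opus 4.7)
My plan is to prove the two implications separately. The forward direction (marked $\Rightarrow$ identity) is a direct verification in a Jordan basis; the reverse direction (identity $\Rightarrow$ marked) is the substantive part, which I would handle by induction on $\dim W$, peeling off one cyclic summand at a time using a generator of $W$ of maximal exponent and minimal $V$-height.

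For the forward direction, write $W = \bigoplus_{i=1}^k \langle f^{r_i} u_i \rangle$ as in \eqref{eq.wru}, with $(u_1,\dots,u_k) \in \mcU$ and admissible $r$. In the Jordan basis $\{f^j u_i : 0 \le j < t_i\}$ of $V$, a basis vector $f^j u_i$ lies in $f^sW$ iff $j \ge r_i + s$ and in $f^{s+r}V$ iff $j \ge s+r$, so $f^sW \cap f^{s+r}V$ is spanned by those $f^j u_i$ with $j \ge s + \max(r_i, r)$. This coincides with the basis description of $f^s(W \cap f^r V)$, since $W \cap f^rV = \bigoplus \langle f^{\max(r_i,r)} u_i \rangle$.

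For the reverse direction, induct on $\dim W$; the base $W = 0$ is trivial. Otherwise pick $w \in W$ with $\e(w) = e$ maximal in $W$ and, among these, $h_V(w) = h$ minimal. Any $u \in V$ with $f^h u = w$ automatically has $h_V(u) = 0$ and $\e(u) = e+h$. The first key step is to verify that $e+h$ is an elementary divisor: if not, $d(e+h) = 0$ gives $V[f] \cap f^{e+h-1}V = V[f] \cap f^{e+h}V$, so $f^{e-1}w = f^{e+h-1}u$ lies in $f^{e+h}V$, and applying the hypothesis with $s = e-1,\, r = h+1$ yields $w' \in W \cap f^{h+1}V$ with $f^{e-1}(w - w') = 0$. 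Writing $w = w' + (w - w')$ and comparing exponents forces $\e(w') = e$, so $h_V(w') \ge h+1$ contradicts the choice of $h$. Hence $e+h = t_j$ for some $j$, and an exchange in the spirit of Lemma~\ref{la.pd13} applied to the generators of exponent $t_j$ inserts $u$ into a generator tuple of $V$, giving $V = \langle u \rangle \oplus V_0$.

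To close the induction, note that $W \cap \langle u \rangle = \langle w \rangle$: any $p(f) u \in W$ whose lowest-order term is $f^j u$ with $j < h$ would have exponent $e + h - j > e$, contradicting maximality of $e$. Next, adjust the complement $V_0$ so that the projection of $W$ along $\langle u \rangle$ lands in $\langle w \rangle$, thereby obtaining $W = \langle w \rangle \oplus W_0$ with $W_0 \subseteq V_0$, and verify that $W_0$ inherits the intersection identity inside $V_0$. The inductive hypothesis then makes $W_0$ marked in $V_0$, and augmenting the resulting generator tuple of $V_0$ by $u$ marks $W$ in $V$. The main obstacle is precisely this last step: simultaneously arranging $V_0$ so that the splitting of $W$ is clean and so that the identity descends to $W_0 \subseteq V_0$. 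Both can be handled by a suitable $f$-linear adjustment $V_0 \to \langle u \rangle$, with the hypothesis supplying the compatibilities needed, but the bookkeeping in this adjustment is the real work of the proof.
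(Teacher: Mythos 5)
The paper itself does not prove Theorem~\ref{thm.vlski} (it is quoted from \cite{AW3}), so your argument has to stand on its own; your forward direction (a computation in a Jordan basis adapted to $W(r,U)$ as in \eqref{eq.wru}) is fine, but the reverse direction breaks at your ``first key step''. You choose $w\in W$ with $\e(w)=e$ maximal and, among those, $\h(w)=h$ \emph{minimal}, and then, assuming $f^{e-1}w\in f^{e+h}V$, you produce $w'\in W\cap f^{h+1}V$ with $f^{e-1}w'=f^{e-1}w$ and declare that $\e(w')=e$, $\h(w')\ge h+1$ ``contradicts the choice of $h$''. It does not: an element of exponent $e$ of \emph{larger} height is perfectly compatible with $h$ being minimal; only one of smaller height would violate your choice. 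The selection must go the other way (take $h$ \emph{maximal} among elements of exponent $e$ --- heights are bounded, so this is legitimate), or one must iterate $w\mapsto w'$, which fixes $f^{e-1}w$ and strictly increases height, until the process stops. With that correction the same computation delivers the statement you actually need, namely $\h(f^{e-1}w)=e+h-1$, i.e.\ exact socle height.

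Two further gaps. First, the conclusion you extract, ``$e+h$ equals some $t_j$'', is too weak for the insertion step: Lemma~\ref{la.pd13} is proved only in the homogeneous case where all generators have the same exponent, and a height-zero vector whose exponent coincides with an elementary divisor need not embed in a generator tuple. For instance, with $t=(1,2,4)$ and $u=u_1+f^2u_3$ one has $\h(u)=0$ and $\e(u)=2=t_2$, yet $\h(fu)=3$, so $\langle u\rangle$ is not a direct summand and $u$ lies in no element of $\mcU$. What insertion really requires is the purity condition $\h(f^{e+h-1}u)=e+h-1$ --- exactly what the corrected height argument gives --- together with an exchange argument valid in the inhomogeneous setting (not Lemma~\ref{la.pd13} as it stands). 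Second, the decisive part of the induction --- producing a complement $V_0$ of $\langle u\rangle$ with $W=\langle w\rangle\oplus W_0$, $W_0\subseteq V_0$, and checking that the intersection identity descends to $(W_0,V_0)$ --- is only announced; you yourself call it ``the real work''. Once such a compatible splitting exists the descent is easy, since intersections of $f$-invariant subspaces respect the decomposition $V=\langle u\rangle\oplus V_0$, but constructing the splitting is where the hypothesis must be used again, and without it the proposal is a programme for a proof rather than a proof.
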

A different characterization of marked subspaces  can be found in
\cite{FPP}.

\section{Characteristic subspaces}    \label{sct.chrst}

\subsection{Hyperinvariant subspaces and projections} \label{sbsct.hypr}

Let
\,$  U = \bigl( U_{a_1}, \dots , U_{a_m} \bigr) \in \mcU  $\,
be given as in  \eqref{eq.prtt} such that
\,$  \e (  U_{ a_{\mu} } ) =  a_{\mu}$,
$ |   U_{ a_{\mu} } |  = d(  a_{\mu})$,
 $ \mu = 1, \dots, m$, and
let \eqref{eq.fvp},
i.e.
\,$
V = \langle U_{a_1}    \rangle  \oplus \cdots \oplus
  \langle  U_{a_m}   \rangle $,
 be  the corresponding decomposition of $V$.
In the following we are concerned with characteristic
subspaces $X$ which have the property that
$ x \in X $,  implies
\beq \label{eq.umdef}
 \pi _i x \in X \quad {\rm{for \: \: \: all}} \quad i = 1, \dots , m.
 \eeq
We have seen in Example~\ref{ex.fu22} that \eqref{eq.umdef}
 is not satisfied
for all  $ X \in {\rm{Chinv}}(V) $.

 \medskip

\begin{lemma} \label{la.essntl}

If  $X $ is a characteristic subspace  of $V$ then
\[
 X \cap \langle  U_{a_i}    \rangle  = f^{c_i}  \langle U_{a_i} \rangle ,
\:\:\: i = 1, \dots , m ,
\]
for some $ 0 \le c_i \le a_i $.
\end{lemma}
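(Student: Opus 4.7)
The plan is to fix the index $i$, set $Y = X \cap \langle U_{a_i}\rangle$, and identify a nonnegative integer $c_i$ with $Y = f^{c_i}\langle U_{a_i}\rangle$. If $Y = 0$, take $c_i = a_i$. Otherwise, because the decomposition $V = \langle U_{a_1}\rangle \oplus \cdots \oplus \langle U_{a_m}\rangle$ is $f$-compatible, heights of elements of $\langle U_{a_i}\rangle$ computed in $V$ agree with heights in $\langle U_{a_i}\rangle$, and this homogeneous block has the convenient property $\e(z) + \h(z) = a_i$ for every nonzero $z \in \langle U_{a_i}\rangle$. I would define $c = \min\{\h(y) : y \in Y,\, y \ne 0\}$ and observe $0 \le c \le a_i - 1$.

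Next I would pick $y_0 \in Y$ with $\h(y_0) = c$ and write $y_0 = f^c z_0$ with $\h(z_0) = 0$, so $\e(z_0) = a_i$. Applying Lemma~\ref{la.pd13} inside the homogeneous block $\langle U_{a_i}\rangle$ (which is itself a direct sum of cyclic summands of equal exponent $a_i$) allows me to replace one of the generators in $U_{a_i}$ by $z_0$; gluing with the untouched tuples $U_{a_l}$ for $l \ne i$ produces a new generator tuple of $V$ in which I may relabel so that $z_0 = u_1 \in U_{a_i}$ and $y_0 = f^c u_1$.

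The core step is to promote this single relation to $f^c u_j \in Y$ for every $u_j \in U_{a_i}$. Fix such a $u_j$ and form the tuple obtained from $U$ by substituting $u_1 + u_j$ for $u_1$; this substitution preserves the exponent ordering (since $\e(u_1+u_j) = a_i$), and the direct-sum decomposition is preserved because $u_1$ is recoverable from $u_1+u_j$ and $u_j$, so the new tuple lies in $\mcU$. Lemma~\ref{la.ausoam} then delivers $\alpha \in \Aut_f(V)$ with $\alpha(u_1) = u_1 + u_j$ and fixing every remaining generator. Since $X$ is characteristic, $\alpha(y_0) = f^c(u_1 + u_j)$ lies in $X$, and since it also lies in $\langle U_{a_i}\rangle$ it lies in $Y$; subtracting $y_0 = f^c u_1$ gives $f^c u_j \in Y$, whence $f^c \langle U_{a_i}\rangle \subseteq Y$.

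The reverse inclusion is immediate: every $y \in Y$ satisfies $\h(y) \ge c$ by the choice of $c$, so $y \in f^c V$, and the direct-sum identity $f^c V = \bigoplus_l f^c \langle U_{a_l}\rangle$ yields $Y \subseteq f^c V \cap \langle U_{a_i}\rangle = f^c \langle U_{a_i}\rangle$. Hence $c_i = c$ does the job. The step I expect to be the main obstacle is the \emph{spreading} argument in paragraph three: manufacturing enough automorphisms in $\Aut_f(V)$ to propagate the single vector $f^c u_1 \in Y$ to every generator of $U_{a_i}$. The combination of Lemma~\ref{la.pd13} (which lets us install any height-zero $z_0$ as a generator) with Lemma~\ref{la.ausoam} (which turns generator-tuple substitutions into commuting automorphisms) is precisely what makes this propagation possible.
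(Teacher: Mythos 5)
Your proof is correct and follows essentially the same route as the paper's: pick a minimal-height element $y_0 = f^{c} z_0$ of $X \cap \langle U_{a_i}\rangle$, install $z_0$ as a generator of the homogeneous block via Lemma~\ref{la.pd13}, spread $f^{c}u_1$ over the whole block using automorphisms supplied by Lemma~\ref{la.ausoam}, and get the reverse inclusion from the minimality of the height. The only cosmetic difference is that the paper uses automorphisms mapping $u_1$ to each other generator $u_k$ directly, whereas you use $u_1 \mapsto u_1 + u_j$ and then subtract $y_0$; both variants are justified the same way.
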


\begin{proof}
Let
$  U_{a_i} = ( v_1, \dots , v_{\ell} ) $.
Set $ X_{ i } = X \cap       \langle U_{a_i}    \rangle $.
Assume $ X_{ i } \ne 0 $,
and $\h(  X_i ) = c_i $.
Then
\,$      X _i \subseteq f^ {c_i}   \langle U_{a_i}    \rangle $.
Suppose $ y \in X_i $ and
 $ \h(y ) = c_i $.
Then
$ y = f^{c_i} w $ for some $ w \in V$ with  $ \h(w) = 0$.
We have
 $w = w_1 + \cdots + w_m $,
 $ w_j \in  \langle  U_{a_j}   \rangle $, $ j = 1, \dots , m$.
Then
\,$f ^{c_i}  w =  f ^{c_i}  w_1 +
 \cdots +   f ^{c_i}  w_m \in  X_i $
and \eqref{eq.fvp} 
 imply
     \,$f^c  w_j = 0 $ if $ j\ne i$.
Hence
\,$ y  = f^{c_i} w_i $, $ w_i \in  U_{a_i}   $, $\h(w_i) = 0$.
By  Lemma~\ref{la.pd13}  we can   replace some vector
  in  $ U_{a_i}  $ by $w_i$. Thus,
without loss of generality we can  assume
$ y =  f^{c_i} v_1 $.
Let $ \alpha_k \in  {\rm{Aut}}_f(V)$ be an automorphism which
maps $ v_1 $ to $v_k$, $k = 1, \dots ,\ell$.   Since
$ X $ is characteristic we obtain $  \alpha _k (y) = f^{c_i} v_k \in
X $. Therefore
\[
 f^{c_i}   \langle v_1, \dots , v_{\ell} \rangle =
 f^{c_i}    \langle U_{a_i}      \rangle  \subseteq X _i .
\]
Hence we have shown that $ X_i =  f^{c_i}   \langle U_{a_i} \rangle   $.
\end{proof}

\begin{lemma} \label{la.usa}
Let
 $ X $ be a characteristic   subspace of $V$.
The following statements are equivalent.
\begin{itemize}
\item[\rm{(i)}]  $X $ is hyperinvariant.
\item[\rm{(ii)}]
We have
 \beq \label{eq.drxsch}
X =  \big( X \cap \langle U_{a_1} \rangle  \big)
\, \oplus \, \cdots \, \oplus \,  \big(  X \cap \langle U_{a_m}
 \rangle \big).
\eeq
\item[\rm{(iii)}]
If
$ x \in X $,
 $ x = x_1 \, + \cdots + \, x_m $, $x_i \in  \langle U_{a_i} \rangle$,
$i = 1, \dots , m$,
then
\beq \label{eq.prpop}
  x_i
\in X \quad {\rm{for \: \: \: all}} \quad i = 1,
\dots , m.
\eeq
\end{itemize}
\end{lemma}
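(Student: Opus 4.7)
The plan is to prove the cyclic chain (i)$\,\Rightarrow\,$(iii)$\,\Rightarrow\,$(ii)$\,\Rightarrow\,$(i), where the first two implications are essentially immediate and the real content lies in the last one, which I will settle by combining Lemma~\ref{la.essntl} with Theorem~\ref{thm.chhym}.

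For (i)$\,\Rightarrow\,$(iii), I would recall from Section~\ref{sct.dfn} that the projections $\pi_{\mu}:V\to V$ onto $\langle U_{a_{\mu}}\rangle$ along the complementary summands commute with $f$, i.e.\ $\pi_{\mu}\in\End_f(V)$. Hyperinvariance of $X$ then forces $\pi_{\mu}X\subseteq X$, and writing $x_i=\pi_i x$ gives exactly \eqref{eq.prpop}. The implication (iii)$\,\Rightarrow\,$(ii) is a one-liner: the inclusion $X\supseteq\bigoplus_i(X\cap\langle U_{a_i}\rangle)$ is always true, and (iii) ensures $X\subseteq\bigoplus_i(X\cap\langle U_{a_i}\rangle)$.

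The main step is (ii)$\,\Rightarrow\,$(i). Here I would apply Lemma~\ref{la.essntl} to the characteristic subspace $X$ to obtain
\[
X\cap\langle U_{a_i}\rangle = f^{c_i}\langle U_{a_i}\rangle,\qquad 0\le c_i\le a_i,\quad i=1,\dots,m.
\]
Substituting into \eqref{eq.drxsch} yields
\[
X \;=\; f^{c_1}\langle U_{a_1}\rangle\,\oplus\,\cdots\,\oplus\,f^{c_m}\langle U_{a_m}\rangle,
\]
which, when expanded element by element in the tuple $U=(u_1,\dots,u_k)$, is exactly of the form $W(r,U)$ of \eqref{eq.wru} with an admissible tuple $r$ (constant on each block of equal exponent). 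Therefore $X$ is marked. Since $X$ is also characteristic by hypothesis, Theorem~\ref{thm.chhym} applies and gives that $X$ is hyperinvariant.

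The only step where one has to be a bit careful is verifying that $X$ really is marked in (ii)$\,\Rightarrow\,$(i); but once Lemma~\ref{la.essntl} pins down the shape of each intersection $X\cap\langle U_{a_i}\rangle$, this is immediate from the very definition of $W(r,U)$ given in \eqref{eq.wru}. No further case analysis over $K=GF(2)$ is required, because both Lemma~\ref{la.essntl} and Theorem~\ref{thm.chhym} have already been established in the preceding sections.
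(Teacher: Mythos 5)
Your proposal is correct and follows essentially the same route as the paper: the projections $\pi_{\mu}\in\End_f(V)$ handle the easy implications, and the key step (ii)$\,\Rightarrow\,$(i) combines Lemma~\ref{la.essntl} with Theorem~\ref{thm.chhym} exactly as in the paper's proof. Arranging the implications cyclically rather than as the paper's two equivalences is only a cosmetic difference.
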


\begin{proof}
 (ii) $\LR$  (iii).
Because of
$  X \cap  \langle U_{a_i} \rangle \subseteq   \pi _i X  $
we have  $ \pi _i X \subseteq  X $ if and only if
\beq \label{eq.pewx}
 \pi _i X = X \cap  \langle U_{a_i} \rangle .
\eeq
It is obvious that
 (iii), as well as (ii), is satisfied
if and only if \eqref{eq.pewx} holds for all $i$, $ i = 1,
\dots , m$.
\\
(ii) $\Ra $ (i)
If   \eqref{eq.drxsch} holds then  Lemma~\ref{la.essntl} implies
\[
X =  f^{c_1}  \langle U_{a_1} \rangle \, \oplus \, \cdots \, \oplus \,
 f^{c_m}  \langle U_{a_m} \rangle .
\]
Hence $X$ is marked,
and it follows from Theorem~\ref{thm.chhym}
that $X$ is hyperinvariant.
\\
(i) $\Ra $ (ii)
Set $ \tilde{X} =
\oplus _{i = 1} ^m ( X \cap \langle U_{a_i}  \rangle )$.
Then $ \tilde{X} \subseteq X $.
Let $x \in X$. Then   $  \pi _{\mu} \in  {\rm{End}}_f(V) $
implies  $  \pi _{\mu}  x  = x_{\mu} \in ( X  \cap \langle U_{a_i}  \rangle )$,
$ \mu = 1,\dots , m$.  Hence $ X \subseteq  \tilde{X}  $.
\end{proof}

\bigskip

We extend Lemma~\ref{la.usa}
to the case where
If $X \in {\rm{Chinv}}(V) \setminus  {\rm{Hinv}}(V)$.

\begin{theorem} \label{thm.lgsth}
If $X $ is a characteristic subspace
then
\beq \label{eq.larg}
 \tilde{X} = \big( X \cap \langle U_{a_1}  \rangle
\big)
\oplus
\dots \oplus
\big(X \cap \langle U_{a_m}  \rangle\big)
\eeq
is the largest hyperinvariant subspace contained in $X$.
\end{theorem}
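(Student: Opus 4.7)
The plan is to first verify that $\tilde{X}$ is hyperinvariant and then show that every hyperinvariant subspace of $X$ is absorbed by $\tilde{X}$. By Lemma~\ref{la.essntl} we already know that $X \cap \langle U_{a_i}\rangle = f^{c_i}\langle U_{a_i}\rangle$ for some $0 \le c_i \le a_i$, so $\tilde{X} = f^{c_1}\langle U_{a_1}\rangle \oplus \cdots \oplus f^{c_m}\langle U_{a_m}\rangle$ is automatically contained in $X$ and has the marked form \eqref{eq.rpcs}. Theorems~\ref{thm.qv} and \ref{thm.chhym} then reduce the hyperinvariance of $\tilde{X}$ to checking the ordering conditions \eqref{eq.cugl}, namely $c_1 \le \cdots \le c_m$ and $a_1 - c_1 \le \cdots \le a_m - c_m$.

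The height inequality $c_i \le c_{i+1}$ should fall out of Lemma~\ref{la.mma8}. Given $v \in U_{a_i}$ and $u \in U_{a_{i+1}}$, that lemma supplies an $\alpha \in {\rm Aut}_f(V)$ with $\alpha u = v + u$. Since $f^{c_{i+1}} u \in X$ and $X$ is characteristic, one obtains $f^{c_{i+1}} v \in X \cap \langle U_{a_i}\rangle = f^{c_i}\langle U_{a_i}\rangle$, and comparing heights forces $c_{i+1} \ge c_i$ when $c_{i+1} < a_i$, while $c_{i+1} \ge a_i \ge c_i$ takes care of the border case.

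The harder step, and the main obstacle, is the exponent inequality $a_{i+1} - c_{i+1} \ge a_i - c_i$, since Lemma~\ref{la.mma8} transports generators only in one direction. I would handle it by building a companion automorphism in the spirit of the proof of Lemma~\ref{la.mma8}. Given $v \in U_{a_i}$ and $u \in U_{a_{i+1}}$, the vector $\tilde{v} := v + f^{a_{i+1}-a_i} u$ has exponent $a_i$ and height $0$, so a Jordan-basis replacement argument akin to Lemma~\ref{la.pd13} should show that swapping $v$ for $\tilde{v}$ inside $U_{a_i}$ yields another generator tuple; Lemma~\ref{la.ausoam} then delivers $\alpha \in {\rm Aut}_f(V)$ sending $v$ to $\tilde{v}$ and fixing the remaining members of $U$. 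Applying $\alpha$ to $f^{c_i}v \in X$ gives $f^{c_i + a_{i+1}-a_i} u \in X \cap \langle U_{a_{i+1}}\rangle = f^{c_{i+1}}\langle U_{a_{i+1}}\rangle$, and a height comparison produces $c_i + a_{i+1} - a_i \ge c_{i+1}$, which is the desired inequality. The degenerate case $c_i = a_i$ (where $X \cap \langle U_{a_i}\rangle = 0$ and hence $a_i - c_i = 0$) trivializes the claim. Verifying that the replacement truly gives a generator tuple is the spot that needs the most care.

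For the maximality part, let $Y \subseteq X$ be any hyperinvariant subspace. Applying the equivalence (i)~$\Leftrightarrow$~(ii) of Lemma~\ref{la.usa} to $Y$ itself yields $Y = \bigoplus_{i=1}^m (Y \cap \langle U_{a_i}\rangle)$, and the summand-wise inclusions $Y \cap \langle U_{a_i}\rangle \subseteq X \cap \langle U_{a_i}\rangle$ immediately force $Y \subseteq \tilde{X}$, completing the proof.
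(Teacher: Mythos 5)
Your proposal is correct and follows essentially the same route as the paper's own proof: Lemma~\ref{la.essntl} plus Theorems~\ref{thm.qv}/\ref{thm.chhym} reduce everything to the two chains of inequalities, the first obtained via Lemma~\ref{la.mma8} exactly as you describe, the second via the same substitution $v \mapsto v + f^{a_{j}-a_i}u$ (the paper likewise just asserts this replacement yields a generator tuple), and maximality via Lemma~\ref{la.usa} applied to the hyperinvariant subspace. The only cosmetic difference is that you argue on consecutive indices while the paper treats arbitrary $i<j$, which is immaterial.
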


\begin{proof}
From  Lemma \ref{la.essntl}  we obtain
\beq
\label{eq.nnzvu}
  X \cap \langle U_{a_i} \rangle    =
f^{c_i} \langle U_{a_i} \rangle  , \;  0 \leq c_i \leq a_i,
\; i = 1, \dots, m .
\eeq
 Hence
\beq \label{eq.cvhrn}
 \tilde{X} =
f^{c_1} \langle U_{a_1}   \rangle
\oplus
\dots \oplus f^{c_m} \langle U_{a_m}  \rangle .
\eeq
We show that  $ \tilde{X}  $ is hyperinvariant.
By    Theorem~\ref{thm.qv}
we have to prove that
\,$i < j$\, implies
\beq \label{eq.scinq}
 c_i \leq c_j \quad {\rm{and}} \quad a_i -c_i \leq a_j - c_j.
\eeq
Suppose  $i <j$ and let  $v_i \in  U_{a_i} , v_j \in  U_{a_j} $.
By Lemma~\ref{la.mma8}
 there exists an  $ \alpha \in \Aut_f(V)$
such that  $\alpha v_j = v_i+v_j$.
Since $ X $ is characteristic and
\,$ f^{c_j}v_j \in \tilde{X}  \subseteq  X  $\,
we have
 $\alpha(f^{c_j}v_j)= f^{c_j}(v_i+v_j) \in X$.
Thus
\[ f^{c_j} v_j \in f^{c_j} \langle U_{a_j} \rangle  = X \cap
  \langle U_{a_j} \rangle \subseteq X
\]
 implies
$  f^{c_j}v_i \in X  $. Hence
 $  f^{c_j}v_i \in X  \cap \langle U_{a_i}  \rangle $.
Then \eqref{eq.nnzvu} yields  \,$ c_j \geq c_i$.

The second inequality in \eqref{eq.scinq}
can be proved as follows.
Because of
 \,$
\e( v_i ) = a_i = \e ( v_i +f^{a_j-a_i}v_j  )
$\,
 we can substitute
$ v_i $  in  $ U_{a_i} $  by  $ v_i +f^{a_j-a_i}v_j $.
Then  there exists an $ \alpha \in \Aut_f(V)$
with  $  \alpha v_i =  v_i +f^{a_j-a_i}v_j $.
Hence
\[  \alpha f^{c_i} v_i =  f^{c_i} v_i +f^{c_i+ a_j-a_i}v_j .
\]
Because of  \,$f^{c_i}v_i \in X$\, we have
\,$f^{c_i} v_i +f^{c_i+ a_j-a_i}v_j \in X$, and therefore
$f^{c_i+ a_j-a_i}v_j \in X$.
Hence  $f^{c_i+ a_j-a_i}v_j \in \langle U_{a_j} \rangle$ implies
 $f^{c_i+ a_j-a_i}v_j \in X \cap \langle U_{a_j} \rangle$.
Then \eqref{eq.nnzvu} yields
 $c_i + (a_j-a_i) \geq c_j $,
i.e.  $a_i -c_i \leq a_j - c_j$.

It remains to show that all  hyperinvariant subspaces contained
in $X$ are subsets of  $ \tilde{X} $.
Let $ W \in {\rm{Hinv}}(V) $.   
Then it follows from  Lemma~\ref{la.usa}  that
\[
   W =
   (W  \cap  \langle U_{a_1} \rangle  )
\oplus \cdots \oplus   (W  \cap  \langle U_{a_m} \rangle  )
=
f^{d_1} \langle U_{a_1}   \rangle
\oplus
\dots \oplus f^{d_m} \langle U_{a_m}  \rangle ,
\]
with suitable integers $0 \le d_i \le a_i$.
Suppose $ W  \subseteq X $. Then
\[
 f^{d_i} \langle U_{a_i}  \rangle =   W   \cap  \langle U_{a_i} \rangle
 \subseteq X  \cap  \langle U_{a_i} \rangle  =
f^{c_i} \langle U_{a_i}  \rangle .
\]
Therefore  \eqref{eq.cvhrn} implies  $ W  \subseteq \tilde{X} $.
\end{proof}

\subsection{Special cases}

In the following lemma we assume that $f$ is such that
\beq \label{eq.dakr}
 d(a) = k > 1 , \quad   {\mbox{and}}  \quad   d(r) = 0 \quad
  {\mbox{if}} \quad   r \ne a,
\eeq
or equivalently $ V[f] = f^{a-1}V  $ and $\dim  V[f] = k > 1 $.
In that case
there are $k$ blocks in the
Jordan form of $f$ and all Jordan blocks of $f$ have size~$a$.

\begin{lemma} \label{la.nzbw}
Assume \eqref{eq.dakr}.
Then there exist $ \beta, \gamma \in {\rm{Aut}}_f(V)$
such that
\,$
    \beta +  \gamma =    \iota $.
\end{lemma}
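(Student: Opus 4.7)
The plan is to use Lemma~\ref{la.ausoam} to reduce the statement to a purely matrix-theoretic question over $GF(2)$. Under hypothesis \eqref{eq.dakr}, $V$ admits a generator tuple $U = (u_1,\dots,u_k)$ with $\e(u_i) = a$ for each $i$ and $V = \langle u_1 \rangle \oplus \cdots \oplus \langle u_k \rangle$. Given a matrix $B = (B_{ij})$ in $GL_k(GF(2))$, define $\tilde u_i = \sum_j B_{ji} u_j$. Each $\tilde u_i$ still has exponent $a$, and the invertibility of $B$ ensures that $\tilde u_1, \dots, \tilde u_k$ are linearly independent modulo $\IIm f$; hence $\tilde U = (\tilde u_1,\dots,\tilde u_k) \in \mcU$, and by Lemma~\ref{la.ausoam} the assignment $u_i \mapsto \tilde u_i$ extends uniquely to an automorphism $\beta \in \Aut_f(V)$.

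Since $K = GF(2)$, the identity $\beta + \gamma = \iota$ is equivalent to $\gamma = \iota + \beta$. Evaluating on $U$ shows that $\gamma$ corresponds to the matrix $I + B$ in exactly the same sense, so $\gamma \in \Aut_f(V)$ iff $I + B \in GL_k(GF(2))$. The lemma therefore reduces to exhibiting a matrix $B \in GL_k(GF(2))$ such that $I + B$ also lies in $GL_k(GF(2))$.

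Such a $B$ exists precisely because $k > 1$: what is required is that neither $0$ nor $1$ occur as an eigenvalue of $B$ over $GF(2)$, equivalently that the characteristic polynomial $p(x)$ of $B$ satisfies $p(0) \ne 0$ and $p(1) \ne 0$. The polynomial $p(x) = x^k + x + 1$ satisfies $p(0) = p(1) = 1$ in $GF(2)$ for every $k \ge 2$, so taking $B$ to be its companion matrix does the job. (For $k = 2$ one could alternatively write down $\beta u_1 = u_1 + u_2$, $\beta u_2 = u_1$ by hand.) There is no serious obstacle: the hypothesis $k > 1$ is used only at this final step, consistent with the fact that $GL_1(GF(2)) = \{1\}$ forces $B = 1$ and $I + B = 0$.
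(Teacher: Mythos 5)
Your proof is correct, but it follows a genuinely different route from the paper. The paper proves Lemma~\ref{la.nzbw} by exhibiting $\beta$ and $\gamma$ as explicit matrices in Kronecker-product form: it writes the identity as a sum of two coordinate projections $P_1+P_c=\iota$ and adds to each the same shift-type matrix $M$ commuting with the Jordan form, so that $\beta=M+P_1$, $\gamma=M+P_c$ and $\beta+\gamma=\iota$ over $GF(2)$; the verification is a direct matrix computation (commutation with $J$ plus invertibility), and the resulting automorphisms mix the subspaces $f^{l}V$. You instead use Lemma~\ref{la.ausoam} to translate the problem into the module picture: an automorphism induced by a constant matrix $B$ acting on a generator tuple of equal exponents, so that the lemma reduces to the purely $k\times k$ statement that $GL_k(GF(2))$ contains some $B$ with $I+B$ also invertible, i.e.\ a matrix with neither $0$ nor $1$ as an eigenvalue over $GF(2)$, which the companion matrix of $x^k+x+1$ supplies for every $k\ge 2$. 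What your approach buys: it makes completely transparent where the hypothesis $k>1$ of \eqref{eq.dakr} is used (since $GL_1(GF(2))=\{1\}$ forces $I+B=0$), it is uniform in the exponent $a$, it avoids the Kronecker-ordering bookkeeping needed to check that the paper's $M$, $P_1$, $P_c$ commute with $J$, and it produces ``homogeneous'' automorphisms $\beta$, $\gamma=\iota+\beta$ given by constant matrices on the generators; the paper's construction, by contrast, needs no appeal to the bijection $\Theta_U$ of Lemma~\ref{la.ausoam}. One small step you should make explicit: the claim that $\tilde U\in\mcU$ follows in one line because the tuple $\bigl(f^{l}\tilde u_j\bigr)_{0\le l<a,\,1\le j\le k}$ is obtained from the Jordan basis $\bigl(f^{l}u_j\bigr)$ by the invertible matrix $B\otimes I_a$, hence is again a Jordan basis (this is the same computation as in Lemma~\ref{la.pd13}); likewise $\gamma$ acts on that basis by $(I+B)\otimes I_a$, which justifies the asserted equivalence $\gamma\in\Aut_f(V)$ iff $I+B\in GL_k(GF(2))$.
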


\begin{proof}
Let $N_a$ be the Jordan block \eqref{eq.npjo}.
Then the Jordan form of $f$ is
\[
  J = \diag(N_a, \dots , N_a) =  I_k \otimes N_{a} ,
\]
and it is no loss of generality to assume $ f = J$.
Define \,
$M   = (N_a \otimes I_{ k} )  - (N_a ^T \otimes I_{k}) $.
Then
\[
M = \bpm
0 &  -   I_{k} & 0  &  &  &
\\
 I_{ k} & 0 &  - I_{ k}  &  &   &
\\
 &   \ddots & \ddots  & \ddots   &  &
\\
 &   &  \ddots  & \ddots   & \ddots  &
\\
 &   &   &  \ddots & 0  & - I_{ k}
\\
 &   &   &   & I_{ k}   &  0
\epm   _{ak \times ak}
\]
and \,$ MJ = JM $.
Set
\[
 P_1 = \diag(1, 0, \dots , 0) _{a \times a} \otimes  I_{ k}
=
\bpm
 I_{k} &   &         &  &
\\
 &  0   &       &   &
\\
 &    &       & \ddots  &
\\
 &   &   &        &  0
\epm _{ak \times ak} ,
\]
\[
 P_c =  \diag(0, 1 \dots , 1) _{a \times a} \otimes  I_{ k}
=
\bpm
0 &   &         &  &
\\
 &  I_k   &       &   &
\\
 &    &       & \ddots  &
\\
 &   &   &        &  I_k
\epm _{ak \times ak} ,
\]
and
\,$ \beta  = M + P_1$, \: $ \gamma   = M + P_c$.
Then
$  \beta  ,  \gamma \in {\rm{Aut}}_f(V) $,
and
\,$   \beta  +  \gamma   =  \iota $.
\end{proof}

\begin{lemma} \label{la.schl}
Suppose $X $ is a characteristic subspace of $V$.
Let
$ x \in X $ and
\[
 x = x_1 + \cdots + x_m ,
\:
  x_i \in \langle U_{a_i}  \rangle,
\,
 i = 1, \dots ,m .
\]
If \,\,$ |  U_{a_s} |  > 1 $\, then \,$ x_s \in X $.
\end{lemma}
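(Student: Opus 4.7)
The plan is to exploit the key GF(2) feature that $1+1=0$ together with Lemma~\ref{la.nzbw}, applied not to the ambient $V$ but to the single summand $\langle U_{a_s}\rangle$.

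First, I would note that the restriction $f_s := f|_{\langle U_{a_s}\rangle}$ satisfies the hypothesis \eqref{eq.dakr} of Lemma~\ref{la.nzbw}: on $\langle U_{a_s}\rangle$ all Jordan blocks of $f_s$ have the same size $a_s$, and the number of such blocks is $|U_{a_s}| > 1$. Hence Lemma~\ref{la.nzbw} gives automorphisms $\beta_s,\gamma_s\in \mathrm{Aut}_{f_s}(\langle U_{a_s}\rangle)$ with
\[
\beta_s+\gamma_s \,=\, \iota_{\langle U_{a_s}\rangle}.
\]

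Next, I would extend these to automorphisms of the whole $V$ by acting as the identity on the complementary summand. Concretely, using the decomposition \eqref{eq.fvp} I define $\beta,\gamma\in \End(V)$ by
\[
\beta|_{\langle U_{a_s}\rangle} = \beta_s,\quad \gamma|_{\langle U_{a_s}\rangle} = \gamma_s,\quad \beta|_{\langle U_{a_j}\rangle} = \gamma|_{\langle U_{a_j}\rangle} = \mathrm{id} \text{ for } j\ne s.
\]
Since the decomposition \eqref{eq.fvp} is $f$-invariant and $\beta_s,\gamma_s$ commute with $f_s$, it is immediate that $\beta,\gamma\in \mathrm{Aut}_f(V)$.

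Now the crucial point: over $K=GF(2)$, the sum $\beta+\gamma$ acts as $\beta_s+\gamma_s=\iota$ on $\langle U_{a_s}\rangle$, and on each $\langle U_{a_j}\rangle$ with $j\ne s$ it acts as $\mathrm{id}+\mathrm{id}=0$. Hence
\[
\beta+\gamma \,=\, \pi_s .
\]
Since $X$ is characteristic, $\beta(x),\gamma(x)\in X$, and therefore
\[
x_s \,=\, \pi_s(x) \,=\, \beta(x)+\gamma(x) \,\in\, X,
\]
which is the desired conclusion. There is no substantial obstacle here; the only point that needs a line of verification is that the block-diagonal extensions $\beta,\gamma$ still commute with $f$ and are automorphisms, which follows directly from the $f$-invariance of the decomposition \eqref{eq.fvp}.
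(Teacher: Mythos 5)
Your proof is correct and follows essentially the same route as the paper: apply Lemma~\ref{la.nzbw} to $\langle U_{a_s}\rangle$ (where all blocks have size $a_s$ and there are $|U_{a_s}|>1$ of them), extend $\beta_s,\gamma_s$ by the identity on the complementary summands, and use that over $GF(2)$ the sum of the two resulting automorphisms equals $\pi_s$, so $x_s=\beta(x)+\gamma(x)\in X$. This is precisely the paper's argument (with its $\psi,\phi$ playing the role of your $\beta,\gamma$).
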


\begin{proof}
According to Lemma \ref{la.nzbw}
there exist $ \beta _s , \gamma_s \in
 {\rm{Aut}}_f( \langle U_s  \rangle) $
such that
\[
  \beta _s + \gamma _s
 = {\rm{id}}_{ \langle U_s  \rangle} .
\]
Let
 $ \psi :  V \to V $
and  $ \phi  :  V \to V $  be given by
\[
  \psi v =  \phi v = v \quad {\rm{for}} \quad v \in
  \langle U_1, \dots , U_{s-1} , U_{s+1} ,
\dots , U_m \rangle
\]
and
\[
 \psi v = \beta _s v , \: \phi v = \gamma _s v
 \quad {\rm{for}} \quad v \in U_s.
\]
Then $  \psi, \phi  \in  {\rm{Aut}}_f(V)$.
Therefore
\,$
( \psi + \phi ) x = ( \beta _s + \gamma _s) x_s = x_s \in X$.
\end{proof}

The following two theorems, which involve special
types of  Ulm sequences,  will cover the  hypothesis (i)
of Theorem~\ref{thm.ulmkap}.
It should be mentioned that the proofs  of
Theorem~\ref{thm.eintl} and Theorem~\ref{thm.nfn}
below employ marked subspaces and thus
are based  on Theorem~\ref{thm.chhym}.

\begin{theorem} \label{thm.eintl}
If the sequence \eqref{eq.ulmsq}
contains at most one  Ulm invariant
with  \mbox{$d(i) = 1$},
then each characteristic subspace of $V$ is hyperinvariant.
\end{theorem}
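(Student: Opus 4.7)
The plan is to combine Lemma~\ref{la.usa} with Lemma~\ref{la.schl}. Recall that Lemma~\ref{la.usa} tells us a characteristic subspace $X$ is hyperinvariant precisely when, for every $x \in X$ with decomposition $x = x_1 + \cdots + x_m$ along \eqref{eq.fvp}, each component $x_\mu$ already lies in $X$. Thus the whole task reduces to establishing this componentwise property.

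First, I would apply Lemma~\ref{la.schl}: for every index $\mu$ with $|U_{a_\mu}| = d(a_\mu) \geq 2$, that lemma directly yields $x_\mu \in X$. This takes care of every summand whose associated Ulm invariant exceeds $1$. Under the hypothesis of the theorem, at most one Ulm invariant equals $1$, so Lemma~\ref{la.schl} fails to apply to at most one index.

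Second, I would dispose of that possibly remaining index by subtraction. If every $d(a_\mu)$ is at least $2$, the previous step already exhibits all components of $x$ as elements of $X$ and we are finished. Otherwise let $s$ be the unique index with $d(a_s) = 1$; the previous step then gives $x_\mu \in X$ for all $\mu \neq s$, and since $x \in X$, the identity $x_s = x - \sum_{\mu \neq s} x_\mu$ forces $x_s \in X$ as well. Hence condition (iii) of Lemma~\ref{la.usa} is satisfied for $X$, and the lemma delivers hyperinvariance (through the marked-subspace criterion of Theorem~\ref{thm.chhym}, which is how Lemma~\ref{la.usa} was proved).

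I do not foresee any real obstacle here: Lemmas~\ref{la.usa} and~\ref{la.schl} do the substantive work. The only thing worth highlighting is that Lemma~\ref{la.schl} genuinely needs $|U_{a_s}| > 1$, so the hypothesis ``at most one $d(i) = 1$'' is exactly calibrated so that the missing component, if any, can be recovered by the trivial algebraic manipulation $x_s = x - \sum_{\mu \neq s} x_\mu$.
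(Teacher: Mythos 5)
Your argument is correct and is essentially identical to the paper's proof: the paper likewise applies Lemma~\ref{la.schl} to every index whose Ulm invariant exceeds $1$, recovers the single remaining component by subtraction, and then invokes Lemma~\ref{la.usa} (condition (iii)) to conclude hyperinvariance.
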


\begin{proof}
Suppose   $ |U_{a_q}| \ge   1$,  and  $ | U_{a_i} | > 1 $ if  $i \ne q$.
Let $ x \in X $ and
\,$ x = x_1 + \cdots + x_m $,
  $ x_i \in \langle U_{a_i}    \rangle$,
$ i = 1, \dots ,m $.
Then
Lemma \ref{la.schl} implies
$x_i \in X $ if $ i \ne q $.
Therefore, also $ x_q \in X $, and we obtain
\eqref{eq.prpop}.
 Then Lemma \ref{la.usa}
completes the proof.
\end{proof}

\bigskip

Bru,  Rodman and Schneider \cite[Thm.3.4, p.223]{Bru}
have shown  (see also \cite[3.2.4, p.\,28]{FPP})
that
each invariant subspace of $V$ is marked if and only if
the sizes of blocks in the Jordan form of $f$ differ at most
by one.
Then,  for some $q$ the space
$V$ is of the form   \,$ V = \langle U_{q} \rangle $\, or
 \,$ V = \langle U_{q} \rangle \oplus \langle U_{q + 1} \rangle$.
We only  need  the special case
where
 $ | U_{q}  | = |   U_{q + 1}  | = 1 $.
Then $f$ has Jordan form $ J = \diag(N_{q},
N_{q + 1}) $.
For the sake of completeness we
consider  that case  in  a lemma and include a proof.

\begin{lemma}  \label{la.lemma5}
Let
\beq \label{eq.stqq}
V = \langle u_1   \rangle  \oplus \langle u_2   \rangle
\quad {\rm{and}} \quad
\e(u_1) = q, \: \e(u_2) = q +1 .
\eeq
Then {\rm{(i)}}  each invariant subspace of $V$ is marked,
 {\rm{(ii)}} each characteristic subspace of  $V$  is hyperinvariant.
\end{lemma}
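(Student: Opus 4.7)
The plan is to establish (i) first; then (ii) follows immediately by Theorem~\ref{thm.chhym}, since any characteristic subspace of $V$ would then be both characteristic and marked, hence hyperinvariant.

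To prove (i), let $W \in \Inv V$. My strategy is to explicitly construct a generator tuple $(\tilde u_1, \tilde u_2) \in \mcU$ of $V$, with $\e(\tilde u_1) = q$ and $\e(\tilde u_2) = q+1$, together with integers $0 \le \rho_1 \le q$ and $0 \le \rho_2 \le q+1$, such that $W = \langle f^{\rho_1} \tilde u_1 \rangle \oplus \langle f^{\rho_2} \tilde u_2 \rangle$. First set $W \cap \langle u_2 \rangle = f^{r_2} \langle u_2 \rangle$ and $\pi_1(W) = f^{r_1} \langle u_1 \rangle$, where $\pi_1$ is the projection onto $\langle u_1 \rangle$ along $\langle u_2 \rangle$; both are well-defined because they are invariant subspaces of cyclic spaces. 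If $r_1 = q$ then $W \subseteq \langle u_2 \rangle$ and $W = f^{r_2}\langle u_2 \rangle$ is trivially marked. Otherwise I would choose a lift $w = f^{r_1} u_1 + p(f) u_2 \in W$ of $f^{r_1} u_1 \in \pi_1(W)$, and a short projection argument shows $W = \langle w \rangle + f^{r_2} \langle u_2 \rangle$.

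The construction now splits according to $p(0)$. If $p(0) \neq 0$, then $f^q w = p(0) f^q u_2 \neq 0$, so $\e(w) = q+1$, and I would take $\tilde u_2 = w$, $\tilde u_1 = u_1$. Since $p(f)$ is a unit in $K[f]/(f^{q+1})$, one recovers $u_2$ from $w$ and $u_1$, so $(\tilde u_1, \tilde u_2)$ is indeed a generator tuple, and a direct calculation gives the marked decomposition $W = \langle f^{r_1+r_2} u_1 \rangle \oplus \langle w \rangle$. If $p(0) = 0$, I would instead modify $u_1$ by an element of $f\langle u_2 \rangle$, setting $\tilde u_1 = u_1 + q(f) u_2$ with $q(0) = 0$ (which ensures $\e(\tilde u_1) = q$), chosen so that $f^{r_1} \tilde u_1 \equiv w \pmod{f^{r_2}\langle u_2 \rangle}$, and keep $\tilde u_2 = u_2$; the desired decomposition is then $W = \langle f^{r_1} \tilde u_1 \rangle \oplus \langle f^{r_2} u_2 \rangle$.

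The main obstacle is the existence of the adjustment polynomial $q(f)$ in the $p(0) = 0$ case: this amounts to a divisibility condition on the canonical representative of $p(f)$ modulo $f^{r_2}$, which must be checked using the $f$-invariance of $W$ together with the specific structure of the two cyclic summands. Here the hypothesis $\e(u_2) - \e(u_1) = 1$ is crucial, since the gap of exactly one between block sizes forces the relevant height/exponent constraints in the two cyclic summands to match up, guaranteeing that the required adjustment stays within the polynomial degree range $[1, q]$ and hence exists.
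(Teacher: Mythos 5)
Your reduction of (ii) to (i) via Theorem~\ref{thm.chhym} is exactly the paper's, and your first case ($p(0)\neq 0$) is sound: there $\e(w)=q+1$, $(u_1,w)\in\mcU$, and indeed $W=\langle f^{r_1+r_2}u_1\rangle\oplus\langle w\rangle$. The genuine gap is the case $p(0)=0$: the adjustment polynomial $q(f)$ you postulate need not exist, and in fact the decomposition you aim for, with the second generator kept equal to $u_2$, can fail to exist at all. Your congruence $f^{r_1}\tilde u_1\equiv w \pmod{f^{r_2}\langle u_2\rangle}$ with $q(0)=0$ requires $f^{r_1+1}$ to divide the representative of $p(f)$ modulo $f^{r_2}$, and $f$-invariance of $W$ does not force this. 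Concretely, take $q=2$ (so $\e(u_1)=2$, $\e(u_2)=3$, block sizes differing by exactly one) and $W=\langle f(u_1+u_2)\rangle=\mathrm{span}\{fu_1+fu_2,\,f^2u_2\}$. Then $\pi_1(W)=f\langle u_1\rangle$, $W\cap\langle u_2\rangle=f^2\langle u_2\rangle$, so $r_1=1$, $r_2=2$, and every lift of $fu_1$ is $w=fu_1+p(f)u_2$ with $p(f)\equiv f\pmod{f^2}$; your congruence becomes $q(f)f\equiv f\pmod{f^2}$, i.e.\ $q(0)=1$, contradicting $q(0)=0$. Worse, no decomposition $W=\langle f^{\rho_1}\tilde u_1\rangle\oplus\langle f^{\rho_2}u_2\rangle$ with $\tilde u_1=u_1+q(f)u_2$, $q(0)=0$, exists: every element of $W$ whose $\langle u_1\rangle$-component equals $fu_1$ has $\langle u_2\rangle$-component of height exactly $1$, while in $\langle f\tilde u_1\rangle\oplus f^2\langle u_2\rangle$ that component always has height at least $2$. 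So the heuristic in your last paragraph (that the gap-one hypothesis guarantees the adjustment) is false.

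The missing idea is that in this subcase one must modify the \emph{second} generator, not the first. If the $\langle u_2\rangle$-component of $w$ has height $j\le r_1$, write $p(f)=f^{j}\epsilon(f)$ with $\epsilon(0)\neq 0$; then $w=f^{j}v$ with $v=f^{r_1-j}u_1+\epsilon(f)u_2$, $\h(v)=0$, $\e(v)=q+1$, and $(u_1,v)\in\mcU$, so $v$ should replace $u_2$ (in the example, $u_2\mapsto u_1+u_2$ and $W=\langle f(u_1+u_2)\rangle$). This is in substance what the paper's proof does: it normalizes to $\h(W)=0$, treats cyclic and non-cyclic $W$ separately, and exchanges a height-zero element of $W$ (or a height-zero root $v_2$ with $z_2=f^rv_2$) into the generator tuple, so it never insists on keeping $u_2$ fixed. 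With that extra case added (and the corresponding recomputation of $W$ in the new tuple), your projection set-up can be completed; as written, the $p(0)=0$ step does not go through.
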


 \begin{proof}
(i)
 Let $W \in \Inv (V)  $, $ W \ne 0$.
Set $ \h(W) = \min\{\h(w); \, w \in W, \, w \ne 0\} $.
It is easy to see that it suffices to consider subspaces
$W$ with $  \h(W) = 0 $.
 Suppose $W $ is cyclic, $ W = \langle w \rangle $
and $  \h(w) = 0 $.  Then  $\e(w) = q $ or  $\e(w) = q +1 $.
In the first  case   we have
$ (w, u_2) \in \mcU $, in the second case we obtain
$ (u_1, w) \in \mcU $. Thus  $\langle w \rangle $
is marked.

Now suppose $W$ is not cyclic and $  \h(W) = 0 $.
Then
$W =  \langle  w_1  \rangle  \oplus  \langle  w_2  \rangle $,
 $w_1 \ne 0 $, $w_2 \ne 0 $,
and
\,$
\min\{ \h(w_1),  \h(w_2) \} = 0 $.
Suppose
$  \h(w_1) = 0 $. 
If $ \e(w_1) = q $ then    we have
$ (w_1, u_2 ) \in \mcU$, and we can assume
$ w_1 = u_1$,
such that
$W =   \langle  u_1  \rangle  \oplus  \langle  w_2  \rangle$.
If $ w_2 = z_1 + z_2 $, $z_i \in \langle u_i \rangle $,
$ i = 1,2$,
then
$ W =   \langle  u_1  \rangle  \oplus  \langle  z_2  \rangle$.
Let $\h( z_2 )= r $. Then  $ z_2 = f^r v_2 $, where  $v_2 \in
 \langle u_2 \rangle $, $\h(v_2) = 0 $.
Hence $ \e(v_2) = q + 1 $ and $ (u_1 , v_2 ) \in \mcU $.
Therefore  $W =  \langle  u_1  \rangle  \oplus  f^r
 \langle  v_2  \rangle$.
A similar argument  works in the  case   $ \e(w_1) = q +1  $.
(ii)  This follows from Theorem~\ref{thm.chhym}.
 \end{proof}

\bigskip

Part (ii) of the preceding lemma is a special case of the
following result.

\begin{theorem}  \label{thm.nfn}
Suppose   the Ulm sequence  \eqref{eq.ulmsq} contains
exactly two invariants $ d(i) $ and  $ d(j) $ equal to $1$,
and $i $ and $j $ are successive integers.
Then each characteristic subspace  $ X \subseteq V $ is
hyperinvariant.
\end{theorem}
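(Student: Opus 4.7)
The plan is to reduce to Lemma~\ref{la.lemma5} via restriction to a suitable two-generator subspace. Let $a_s = i$ and $a_{s+1} = j = i+1$ be the two consecutive exponents in the partition $U = (U_{a_1},\dots,U_{a_m})$ for which $|U_{a_s}| = |U_{a_{s+1}}| = 1$; write $U_{a_s} = \{u\}$ and $U_{a_{s+1}} = \{v\}$, so $\e(u) = i$ and $\e(v) = i+1$. For every other index $\mu$ the hypothesis forces $|U_{a_\mu}| > 1$. By Lemma~\ref{la.usa} it suffices to show, for each $x \in X$ written as $x = x_1 + \cdots + x_m$ with $x_\mu \in \langle U_{a_\mu}\rangle$, that every summand lies in $X$.

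Applying Lemma~\ref{la.schl} at each index $\mu \ne s, s+1$ yields $x_\mu \in X$ for all such $\mu$, so that $y := x_s + x_{s+1}$ belongs to $X \cap W$, where $W := \langle u \rangle \oplus \langle v \rangle$. The crucial step is to show that $X \cap W$ is characteristic in $W$ with respect to $f|_W$: given any $\beta \in \Aut_{f|_W}(W)$, extend it to $\hat\beta : V \to V$ by declaring it to be the identity on the $f$-invariant complement $W^c := \bigoplus_{\mu \ne s, s+1} \langle U_{a_\mu}\rangle$; then $\hat\beta \in \Aut_f(V)$, hence $\hat\beta(X) = X$, and since $\hat\beta$ preserves $W$ we conclude $\beta(X \cap W) \subseteq X \cap W$.

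Having established the characteristic property of $X \cap W$ inside $W$, Lemma~\ref{la.lemma5}(ii) shows that $X \cap W$ is hyperinvariant in $W$, and Theorem~\ref{thm.qv} applied to the generator tuple $(u,v)$ of $W$ gives
\[
 X \cap W = f^{c_s}\langle u \rangle \oplus f^{c_{s+1}}\langle v \rangle
\]
for suitable admissible exponents $c_s, c_{s+1}$. By the uniqueness of the decomposition $y = x_s + x_{s+1}$ in this direct sum, we obtain $x_s \in f^{c_s}\langle u \rangle \subseteq X$ and $x_{s+1} \in f^{c_{s+1}}\langle v \rangle \subseteq X$. Thus every summand $x_\mu$ belongs to $X$, and Lemma~\ref{la.usa} concludes that $X$ is hyperinvariant. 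The main obstacle in this plan is verifying that automorphisms of $W$ commuting with $f|_W$ lift to automorphisms of $V$ commuting with $f$; this is handled cleanly by the block-diagonal extension along the canonical $f$-invariant decomposition $V = W \oplus W^c$ afforded by the generator tuple.
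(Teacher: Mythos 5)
Your proof is correct and follows essentially the same route as the paper: peel off the components at all indices of multiplicity greater than one via Lemma~\ref{la.schl}, show that $X\cap W$ with $W=\langle u\rangle\oplus\langle v\rangle$ is characteristic for $f|_W$ by extending automorphisms of $W$ by the identity on the complementary summand, and invoke Lemma~\ref{la.lemma5}. The only (harmless) difference is the finish: the paper concludes that $X$ is marked and applies Theorem~\ref{thm.chhym}, whereas you recover $x_s,x_{s+1}\in X$ from the split form $X\cap W=f^{c_s}\langle u\rangle\oplus f^{c_{s+1}}\langle v\rangle$ -- a fact more accurately justified by Lemma~\ref{la.usa} and Lemma~\ref{la.essntl} applied inside $W$ than by Theorem~\ref{thm.qv} as you cite -- and then close with Lemma~\ref{la.usa}(iii).
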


\begin{proof}
We can assume
$  | U_{a_s} | = | U_{ a_{ s+1}  } | = 1 $,
$ a_s = q $, $  a_{ s+1} = q +1$,
and
$  | U_{a_{ \mu }} |  > 1 $ if $ a_{ \mu} \ne a_ s$ and
 $a_{ \mu}  \ne  a_{ s+ 1}   $.
Suppose  $ X  \subseteq V$ is characteristic.
Let $x \in X $ be
 decomposed  as
\beq \label{eq.brxx}
 x = x_1 + \cdots + x_{s -1 } +
  ( x_s + x _{s +1 } ) + x_{s+2}  + \cdots + x_m  ,
\eeq
$x_{\mu } \in  \langle U_{a_{ \mu }}  \rangle $.
Then Lemma~\ref{la.schl}
implies  $ x_ \mu \in X $ if $  \mu \ne s,  \mu \ne s+1 $.
Hence
$  x_s + x _{s +1 } \in X $.
Set
\,$ X_{\mu} = X \cap \langle U_{a_{ \mu }}  \rangle $.
Then
\[
X = X_1 \oplus \cdots \oplus  X_{s-1} \oplus
  (X \cap  \langle U_{a_s}, U_{a_{s+1}} \rangle )
\oplus
X_{s +2} \oplus \cdots \oplus  X_m .
\]
  Lemma \ref{la.essntl}  yields
\beq \label{eq.alauss}
X_{\mu}  = f^{c_{\mu}}
 \langle U_{a_{\mu}}  \rangle , \: \:
{\rm{if}}
 \: \:
  \mu \ne s,  \,  \mu  \ne s+1 .
\eeq
Let $ \hat{f} $ be the restriction of $f$ on
$  \langle U_{a_s} ,  U_{a_{s+1}} \rangle $.
We  show that the subspace        
 \,$  X_{s,s+1}  := X \cap
 \langle U_{a_s} ,  U_{a_{s+1}} \rangle $\,
 is characteristic in
$ \langle U_{a_s} ,  U_{a_{s+1}}  \rangle $
with respect to  $  \hat{f} $.
Let  $ \hat{\alpha} $ be an automorphism of
$ \langle U_{a_s} ,  U_{a_{s+1}}  \rangle $ that commutes
with  $  \hat{f} $,
and let
$ \imath_{\mu} $, $ \mu = 1, \dots, m$,  be    the identity
map on $\langle U_{a_{\mu}} \rangle$.
We extend $  \hat{\alpha} $ in a natural way to
an automorphism $ \alpha $ of $V$
such that
\[
\alpha = \imath_1 +  \dots + \imath_{s-1}  +  \hat{ \alpha } +
 \imath_{s+2}  +   \dots + \imath_m
\in \Aut_f(V).
\]
 If \,$x \in X_{s,s+1}$\,    then  
\, $ \hat{\alpha} x = \alpha x \in X$.
Thus, $\hat{\alpha} x \in X_{s,s+1}$, which implies that
$ X_{s,s+1} $ is characteristic in
$ \langle U_{a_s} , U_{a_{s}+1} \rangle $
with respect to  $ \hat{f} $.
The pair  $(  U_{a_s} , U_{a_{s}+1}  )$ is a generator tuple of
$ \langle  U_{a_s} , U_{a_{s}+1} \rangle $.
From  Lemma \ref{la.lemma5} we know that the characteristic
subspace $ X_{s,s+1}$ is hyperinvariant and therefore
marked in $  \langle U_s , U_{s+1} \rangle $.
Hence
\beq \label{eq.snso}
X \cap   \langle U_s , U_{s+1} \rangle =
  X_{s,s+1} = f^{c_s}  \langle \tilde{U}_s  \rangle \oplus
f^{c_{s+1}}  \langle \tilde{U}_{s+1} \rangle  ,
\eeq
with $ \langle U_s , U_{s+1} \rangle =
 \langle \tilde{U}_s ,   \tilde{U}_{s+1} \rangle $.
Combining \eqref{eq.alauss} and  \eqref{eq.snso}
shows that
$X$ is marked. Therefore,  by Theorem \ref{thm.chhym},
the subspace $X$ is hyperinvariant.
\end{proof}

\subsection{Characteristic but  not hyperinvariant}

Theorem~\ref{thm.mmnon} below is
 crucial for  a proof
of  the implication ``(ii) $\Ra$ (i)''
of The\-orem~\ref{thm.vnpsa}.
We first note  a technical lemma, which is adapted from
 \cite[p.\,63]{Kap}. It  clears  the way for Theorem~\ref{thm.mmnon}.
Define
 \[
 \langle \bar{U}_{[i,j]} \rangle  =
\langle  U_{a_i}, \dots , U_{a_j}  \rangle , \:\:
  1 \le  i \le j \le m  .
\]

\begin{lemma}  \label{la.prtt}
 Let
\,$
  U =   \bigl( U_{a_1},  \dots ,
 U_{a_m} \bigr) \in \mcU $,
and
\beq
\label{eq.rstm}
| U _{ a_ { \rho } } | = | U _{ a_{\tau } } | = 1,
\:\:
 a_{ \rho} + 1 < a_{ \tau } .
\eeq
Let $ U _{ a_ { \rho } } = (  u_{(\rho)} ) $,
$ U _{ a_{\tau } }  = (   u_{(\tau)}  ) $.
Define
\beq \label{eq.zdefn}
 z =  f^ { a_{\rho } - 1 }  u_{(\rho)}
+  f^ { a_{\tau }  - 2 }   u_{(\tau)}
\eeq
and
\beq \label{eq.ylamk}
 Y =
 \{ y  \in V ; \:\,  \e(y) = 2 ,
\, \,
   \h(y) =   a_ { \rho } -1 ,
\,  \,
\h( fy ) =   a_ { \tau } -1 \} .
\eeq
Then
$ \langle  Y \rangle$ is characteristic, and
\beq \label{eq.zwdmd}
 \langle  Y \rangle
  =  \langle z  \rangle
   \, \oplus \,
 \langle \bar{U}_{[\rho +1,  \tau - 1 ]} \rangle [ f ]
 \,\oplus \,
\langle \bar{U}_{[ \tau + 1 , m]} \rangle [ f^ 2 ] .
 \eeq
\end{lemma}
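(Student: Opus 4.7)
The plan is first to verify that $z \in Y$ by direct computation of its exponent and heights, then to argue that $\langle Y\rangle$ is characteristic because $\Aut_f(V)$ preserves $\e$ and $\h$, and finally to prove \eqref{eq.zwdmd} by a double inclusion. The setup check is routine: $f^2z = 0$ together with $fz = f^{a_\tau-1}u_{(\tau)} \neq 0$ gives $\e(z) = 2$; the gap $a_\rho + 1 < a_\tau$ (i.e.\ $a_\rho - 1 < a_\tau - 2$) yields $\h(z) = a_\rho - 1$, while $\h(fz) = a_\tau - 1$ is immediate. Since any $\alpha \in \Aut_f(V)$ commutes with $f$ and is bijective, it preserves both $\e$ and $\h$, so $\alpha(Y) \subseteq Y$, which propagates to $\alpha\langle Y\rangle \subseteq \langle Y\rangle$; that already gives characteristicness.

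For the inclusion ``$\supseteq$'' in \eqref{eq.zwdmd}, the idea is to realize each generator of the right-hand side as $(z+v) - z$ with $z + v \in Y$. The governing principle will be that whenever $\e(v) \le 2$, $\h(v) > a_\rho - 1$ and $\h(fv) > a_\tau - 1$, the element $z + v$ still satisfies the three defining conditions of $Y$ (adopting $\h(0) = \infty$). Taking $v = f^{a_\nu - 1}u \in \Ker f$ for each $u \in U_{a_\nu}$ with $\rho < \nu < \tau$ will generate $\langle \bar U_{[\rho+1,\tau-1]}\rangle[f]$; taking both $v = f^{a_\nu - 1}u$ and $v = f^{a_\nu - 2}u$ for $u \in U_{a_\nu}$ with $\nu > \tau$ will generate $\langle \bar U_{[\tau+1,m]}\rangle[f^2]$, using $a_\nu \geq a_\tau + 1 \geq a_\rho + 3$ to verify $a_\nu - 2 > a_\rho - 1$. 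The summand $\langle z\rangle$ lies in $\langle Y\rangle$ since $z \in Y$.

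For the reverse inclusion, I would take an arbitrary $y \in Y$, decompose $y = y_1 + \cdots + y_m$ along $V = \bigoplus_i \langle U_{a_i}\rangle$, and read each defining condition of $Y$ componentwise. The exponent bound $f^2y = 0$ places $y_i \in \langle U_{a_i}\rangle[f^2]$; the condition $y \in f^{a_\rho-1}V$ kills $y_i$ for $i < \rho$ and, exploiting $|U_{a_\rho}| = 1$, confines $y_\rho$ to the one-dimensional space $f^{a_\rho - 1}\langle U_{a_\rho}\rangle$. Since $y_i \in f^{a_\rho}V$ for every $i > \rho$, the exact equality $\h(y) = a_\rho - 1$ forces $y_\rho = f^{a_\rho - 1}u_{(\rho)}$. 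Next, $fy \in f^{a_\tau - 1}V$, read componentwise, forces $fy_i = 0$ (hence $y_i \in \langle U_{a_i}\rangle[f]$) for $\rho < i < \tau$; and $\h(fy) = a_\tau - 1$ exactly, combined with $|U_{a_\tau}| = 1$, pins down $y_\tau = f^{a_\tau - 2}u_{(\tau)} + \beta f^{a_\tau - 1}u_{(\tau)}$ for some $\beta \in GF(2)$. This gives $y_\rho + y_\tau = z + \beta fz \in \langle z\rangle$, placing $y$ in the claimed direct sum. Directness is automatic since the three pieces sit in pairwise disjoint blocks $\langle U_{a_\rho}, U_{a_\tau}\rangle$, $\langle \bar U_{[\rho+1,\tau-1]}\rangle$, $\langle \bar U_{[\tau+1,m]}\rangle$ of $V$.

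The main obstacle I anticipate is the bookkeeping in the reverse inclusion: combining the exponent condition with both height constraints to pin down $y_\rho$ and $y_\tau$ exactly, and to force the middle components into $\Ker f$. The two hypotheses driving this step are $|U_{a_\rho}| = |U_{a_\tau}| = 1$ (which makes $y_\rho$ and $y_\tau$ essentially unique in their cyclic blocks) and the gap $a_\tau \geq a_\rho + 2$ (which separates the two height conditions enough that the exponent-$2$ summand $f^{a_\tau-2}u_{(\tau)}$ lies strictly above the height of $z$).
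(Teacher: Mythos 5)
Your overall route coincides with the paper's: characteristicness of $\langle Y\rangle$ from the invariance of $\e$ and $\h$ under $\Aut_f(V)$; the inclusion ``$\supseteq$'' by exhibiting explicit elements of $Y$ (your vectors $z+v$ with $\e(v)\le 2$, $\h(v)>a_\rho-1$, $\h(fv)>a_\tau-1$ are exactly the generators \eqref{eq.typv} used in the paper); and the inclusion ``$\subseteq$'' by decomposing $y\in Y$ along $V=\langle U_{a_1}\rangle\oplus\cdots\oplus\langle U_{a_m}\rangle$ and reading the three defining conditions componentwise.

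There is, however, one step whose justification fails at the point where you place it: the assertion that $y_i\in f^{a_\rho}V$ for every $i>\rho$, which you use to conclude $y_\rho=f^{a_\rho-1}u_{(\rho)}$. At that stage you only know $y_i\in \langle U_{a_i}\rangle[f^2]\cap f^{a_\rho-1}V$, and hypothesis \eqref{eq.rstm} constrains only $a_\tau$: it permits a block $U_{a_\nu}$ with $\rho<\nu<\tau$ and $a_\nu=a_\rho+1$, and an exponent-two element of such a block, e.g.\ $f^{a_\nu-2}u=f^{a_\rho-1}u$ with $u\in U_{a_\nu}$, has height exactly $a_\rho-1$, so it need not lie in $f^{a_\rho}V$; hence the exact equality $\h(y)=a_\rho-1$ does not yet rule out $y_\rho=0$. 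The repair is precisely the paper's ordering: first read $fy\in f^{a_\tau-1}V$ componentwise to get $fy_i=0$ for all $i<\tau$; then the middle components lie in $\langle U_{a_i}\rangle[f]\subseteq f^{a_i-1}V\subseteq f^{a_\rho}V$, while for $i\ge\tau$ the gap $a_\tau\ge a_\rho+2$ gives $\langle U_{a_i}\rangle[f^2]\subseteq f^{a_i-2}V\subseteq f^{a_\rho}V$, and only now does $\h(y)=a_\rho-1$ force $y_\rho=f^{a_\rho-1}u_{(\rho)}$. Since you derive the kernel condition on the middle components immediately afterwards anyway, the fix is merely to move that step ahead of the determination of $y_\rho$; your determination of $y_\tau$ (exactness of $\h(fy)$ together with $\h(fy_i)\ge a_\tau$ for $i>\tau$), the directness of the sum, and the rest of the argument are sound.
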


\medskip

\begin{proof}
The subspace  $\langle  Y \rangle $
is defined   via exponent and height. Hence it is
characteristic.
Set \[
  Q = \langle z  \rangle
   \, \oplus \,  \langle \bar{U}_{[\rho +1,  \tau - 1 ]} \rangle [ f ]
 \, \oplus \,
\langle \bar{U}_{[ \tau + 1 , m]} \rangle [ f^ 2 ] .
\]
We  first show  that   $ Y \subseteq Q $.
Let  $ y \in Y $,
\[
y \,  = x_1 + \cdots + x_m
 \,\;\:
x_i \in  \langle   U_{ a_i  }  \rangle , \, i = 1, \dots , m.
\]
Put $ x_{[i, j ]} = x_i +  \cdots + x_j $, $1 \le i \le j \le m$.
From
\,$\h(y) = a_\rho - 1 $\,
follows
\[
 y \in \oplus _{i=1}^m \,
  f^{ a_\rho - 1 }  \langle   U_{ a_i  } \rangle
=
  f^{ a_\rho - 1 }  \langle    u_{(\rho)}    \rangle
 \oplus
 f^{ a_\rho - 1 }  \langle  \langle \bar{U}_{[ \rho  + 1 , m]} \rangle ,
\]
and
\,$\h(fy) = a_\tau - 1 $\, implies
\[
fy \in   f^{ a_\tau - 1 }  \oplus _{i=\rho }^m
   \langle   U_{ a_i  } \rangle
=
 f^{ a_\tau - 1 }
  \oplus _{i= \tau  }^m   \langle   U_{ a_i  } \rangle .
\]
Therefore \,$ f x_i = 0 $, $ i = \rho , \dots, \tau -1 $,
and
\beq \label{eq.enmplf}
y \in
f^{ a_\rho - 1 }  \langle  u _{ (\rho ) } \rangle
\oplus
 \langle \bar{U}_{[\rho +1 , \tau -1 ]} \rangle [f] \oplus
 f^{ a_\tau - 2 } \langle \bar{U}_{[\tau , m ]} \rangle .
\eeq
From
 \,$ \e(y) = 2 $\,  we obtain
\[
y \in
 f^{ a_\rho - 1}   \langle  u _{ (\rho ) } \rangle
\oplus  \langle \bar{U}_{[\rho +1 , \tau -1 ]} \rangle [f]
\oplus
 \langle \bar{U}_{[\tau , m]} \rangle  [f^2]  .
\]
We have
\[
 \langle \bar{U}_{[\rho +1 , \tau -1 ]} \rangle [f]
=
\oplus _{i = \rho + 1 } ^{\tau -1 }  \,  f^{ a_i- 1 }
\langle U_{ a_{i } }  \rangle  \subseteq  f^{ a_{\rho + 1 } -1}
\oplus _{i = \rho + 1 } ^{\tau -1 }
\langle U_{ a_{i } } \rangle   \subseteq
  f^{ a_{\rho }}  V .
\]
The assumption
$a_\tau >  a_\rho  + 1  $ implies
\[
 \langle \bar{U}_{[\tau , m]} \rangle  [f^2]
=
\oplus _{i = \tau } ^ m f^{ a_i- 2 } \langle U_{ a_{i } } \rangle
\subseteq
 \oplus _{i = \tau } ^ m f^{ a_\tau - 2 } \langle U_{ a_{i } } \rangle
\subseteq   f^{ a_{\rho }}  V
\]
and
\beq \label{eq.serp}
 \langle \bar{U}_{[\tau +1  , m]} \rangle  [f^2]
\subseteq
 \oplus _{i = \tau +1 } ^ m
 f^{ a_{\tau +1}  - 2 } \langle U_{ a_{i } } \rangle
\subseteq  \oplus _{i = \tau +1 } ^ m
   f^{ a_{\tau}  - 1 } \langle U_{ a_{i } } \rangle .
\eeq
Hence  \eqref{eq.enmplf} and
$ h(y) =  a_\rho -1  $ yield  $ x_\rho \ne 0 $,
i.e. $ x_\rho =   f^{ a_\rho - 1 }  u _{ (\rho ) } $.
Then
\begin{multline*}
y =  \big( f^{ a_\rho - 1 }  u _{ (\rho ) }  +
(x_{[\rho + 1 , \tau -1]} \big) +
\big( x_\tau  +  x_{[\tau +1 , m]} \big)
=   x_{[\rho , \tau -1]}  +  x_{[\tau  , m]}  ,
\\
x_{[\rho , \tau -1]} \in
 \langle \bar{U}_{[\rho ,\tau -1]} \rangle  [f] ,
\: \:
 x_{[\tau + 1 , m]} \in  \bar{U}_{[\tau +1 ,m]} \rangle  [f^2] .
\end{multline*}
From $ \e(y) = 2 $ and $ f x_{[\rho , \tau -1]} = 0 $ follows
$  x_{[\tau , m]} \ne 0 $,  $\e (   x_{[\tau  , m]} ) = 2 $,
and  $ f y = f (  x_{[\tau  , m]} ) $.
Therefore
$  x_\tau  \ne 0 $. Otherwise
$ x_{[\tau +1 , m]} \ne 0 $, and  then
\eqref{eq.serp}
would imply  $ \h(fy ) = \h (f   x_{[\tau +1 , m]})
\ge  a_{\tau} $.
Hence
\[
 x_\tau =  f^{ a_\tau - 2 }  u_{(\tau)}  + \gamma
  f^{ a_\tau - 1 }  u_{(\tau)}, \: \gamma \in \{0,1\} .
\]
Putting the pieces together we obtain
\beq \label{eq.mrchg}
y =  \big(
f^{ a_\rho  - 1 }  u_{(\rho)}   + f^{ a_\tau - 2 }  u_{(\tau)}  + \gamma
  f^{ a_\tau - 1 }  u_{(\tau)} \big) +
x_{[\rho + 1, \tau -1 ]} + x_{[\tau +1, m ]},
\eeq
and
\beq \label{eq.alab}
x_{[\rho + 1, \tau -1 ]} \in
  \langle  U_{[  \rho + 1 ] , \tau - 1] }  \rangle [f] ,
\:\:
x_{[\tau +1, m ]} \in \langle
 U_{[\tau +1, m ]}   \rangle [f^2],
\eeq
and
\[
f^{ a_\rho  - 1 }  u_{(\rho)}   + f^{ a_\tau - 2 }  u_{(\tau)}  + \gamma
  f^{ a_\tau - 1 }  u_{(\tau)}
= z + \gamma fz  \in \langle z \rangle .
\]
Hence
 $y \in Q $, and  $  \langle  Y\rangle  \subseteq Q $.
The space
$Q$ is generated by vectors  of the form
 \beq \label{eq.typv}
 z +x_{[\rho + 1, \tau -1 ]} + x_{[\tau +1, m ]},
\;\:
 z + fz +
x_{[\rho + 1, \tau -1 ]} + x_{[\tau +1, m ]},
\eeq
where $ x_{[\rho + 1,\tau -1 ]} $ and $ x_{[\tau +1, m ]}$
satisfy \eqref{eq.alab}.
It is easy to see that the vectors \eqref{eq.typv}
lie in $Y $. Hence
$ Q \subseteq   \langle  Y\rangle $.
\end{proof}

In general, if \eqref{eq.rstm} holds then
there exists more  than one  characteristic subspace
of  $V$  that is
not hyperinvariant.
In a subsequent paper  \cite{AW6} we construct a larger class of
  charcateristic non-hyperinvariant subspaces of $V$,
which includes $ \langle Y \rangle $
  as a special case.

\medskip

\noindent {\bf{Example \ref{ex.fu22} continued.}}
If  \,$| K |   = 2 $,  
\[
 V = K^4 =  \langle e_1  \rangle \oplus   \langle e_2 \rangle ,
\:\:
 \e (  e_1 ) = 1 ,  \,  \e (  e_2 ) = 3 ,
\]
then \eqref{eq.rstm}  holds with
 $(a_1, a_2) = ( 1, 3) $. Let  $X$ be the subspace
as in  \eqref{eq.ouszb}.
Set $ z = e_1 + f e_2 = e_1 + e_3 $ and
$Y =  \{ y ; \;  \e(y) = 2 , \, \h(y) = 0 , \, \h(fy) = 2   \} $.
Then $ X =  \langle z \rangle =   \langle Y  \rangle $.

If $ | K | > 2 $ then
\,$Y = \{c_1 e_1 + d_3 e_3 + d_4 e_4 ; \; c_1 \ne 0, \,
d_3 \ne 0 \}$
and
\[  \langle Y  \rangle = \mbox{span} \{e_1, e_3, e_4 \}  =
\Ker f^2 . \]
Then $  \langle Y  \rangle $ is hyperinvariant, and
 $  \langle z  \rangle \subsetneq
  \langle Y  \rangle $.

\medskip

\begin{theorem}  \label{thm.mmnon}
Suppose  $ K = GF(2)$. Assume that \eqref{eq.rstm} holds.
Let
\beq \label{eq.emalli}
 Y =
\{y;
\e(y) = 2, \:
\h( y) = a_\rho - 1,
 \;\:
\h(f y) = a_\tau - 1 \}.
\eeq
Then  the subspace $\langle Y \rangle $ is
characteristic and not hyperinvariant.
\end{theorem}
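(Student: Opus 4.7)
My plan is to invoke Lemma~\ref{la.prtt}, which has already established that $\langle Y \rangle$ is characteristic, so the only remaining task is to show that $\langle Y \rangle$ is \emph{not} hyperinvariant. By Lemma~\ref{la.usa}, a characteristic subspace $X$ fails to be hyperinvariant precisely when some vector $x \in X$ has a projection $\pi_\mu x \notin X$. I will produce such a pair.

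The natural candidate is the vector $z = f^{a_\rho - 1} u_{(\rho)} + f^{a_\tau - 2} u_{(\tau)}$ of \eqref{eq.zdefn}, which lies in $Y \subseteq \langle Y \rangle$ (a direct check: $\e(z) = 2$, $\h(z) = a_\rho - 1$, and $\h(fz) = a_\tau - 1$). Since $|U_{a_\rho}| = 1$ by \eqref{eq.rstm}, the $\rho$-th summand is just $\langle u_{(\rho)} \rangle$, and the projection $\pi_\rho \in \End_f(V)$ sends $z$ to the nonzero vector $f^{a_\rho - 1} u_{(\rho)}$. It therefore suffices to show that $\langle Y \rangle \cap \langle u_{(\rho)} \rangle = 0$.

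For this intersection I will use the explicit decomposition
\[
 \langle Y \rangle = \langle z \rangle \oplus \langle \bar{U}_{[\rho+1, \tau-1]} \rangle [f] \oplus \langle \bar{U}_{[\tau+1, m]} \rangle [f^2]
\]
supplied by Lemma~\ref{la.prtt}. Since $\e(z) = 2$, every element of $\langle Y \rangle$ has the form $\alpha z + \beta f z + w$ with $\alpha, \beta \in GF(2)$ and $w$ in the last two summands, where $fz = f^{a_\tau - 1} u_{(\tau)}$. For such an element to lie in $\langle u_{(\rho)} \rangle$, its $\langle u_{(\tau)} \rangle$-component $\alpha f^{a_\tau - 2} u_{(\tau)} + \beta f^{a_\tau - 1} u_{(\tau)}$ must vanish, forcing $\alpha = \beta = 0$ by linear independence; the remainder $w$ then vanishes as well by the direct-sum structure of \eqref{eq.fvp}. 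Hence $\langle Y \rangle \cap \langle u_{(\rho)} \rangle = 0$, contradicting $\pi_\rho z \ne 0$.

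I do not anticipate a substantive obstacle here: all the hard work—both the characteristic property and the direct-sum description of $\langle Y \rangle$—has been carried out in Lemma~\ref{la.prtt}. The role of $K = GF(2)$ enters only through that lemma, via the restricted choice of coefficients available in the analysis of $Y$; my contribution is only the short projection argument that converts the decomposition into non-hyperinvariance.
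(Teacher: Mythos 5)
Your proposal is correct and follows essentially the same route as the paper's own proof: both rely on Lemma~\ref{la.prtt} for the characteristic property, take the vector $z$ of \eqref{eq.zdefn} in $\langle Y\rangle$, apply the $f$-commuting projection $\pi_\rho$, and use the decomposition \eqref{eq.zwdmd} to see that $\pi_\rho z = f^{a_\rho-1}u_{(\rho)} \notin \langle Y\rangle$. Your explicit verification that $\langle Y\rangle \cap \langle u_{(\rho)}\rangle = 0$ is just a slightly more detailed write-up of the paper's terse final step.
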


\begin{proof}
Let \,$\pi _\rho  : V \to V $\, be the projection
on  \,$ \langle  u_{(\rho)} \rangle $\, along
the complement
\,$  \langle U_{a_1},  \dots,  U_{a_{\rho - 1}},
U_{a_{\rho + 1}}, \dots, U_{a_m}   \rangle$.
Then $ \pi  _\rho   $ commutes with $f$.
If $ z $ is given by \eqref{eq.zdefn} then
$ z \in  \langle Y \rangle $ and
$  \pi _{\rho}  z = f^{a_{\rho} - 1}  u_{(\rho)} $.
Note that
\,$  f^{ a_{\rho} - 1}  u_{(\rho)} \notin
 \langle z \rangle $.
Therefore \eqref{eq.zwdmd}  implies
$  \pi _{\rho}  z \notin \langle Y \rangle $.
Hence  $\langle Y \rangle $ is not hyperinvariant.
\end{proof}

\section{Proof of Shoda's theorem}

We reformulate Theorem~\ref{thm.vnpsa}
in terms of Ulm invariants and thus  make the connection
with Theorem 26 of Kaplanski \cite[p.\,63]{Kap}.

\begin{theorem} {\rm{(\cite{Sh}, \cite{Kap})   }}
 \label{thm.ulmkap}
Let $ K = GF(2)$ and $ f : V \to V  $ be nilpotent.
Then the following statements are  equivalent.
\\
{\rm{(i)}}  At most two Ulm invariants of $ (V,f) $
are equal to  $1$, and if there are exactly two,
then they correspond to successive integers.
\\
 {\rm{(ii)}} The characteristic subspaces of $V$ are
hyperinvariant.
\end{theorem}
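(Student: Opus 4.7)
\medskip

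\noindent\textbf{Proof proposal for Theorem~\ref{thm.ulmkap}.}

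The plan is to deduce the theorem from the three key results already established: Theorem~\ref{thm.eintl} handles the case of at most one unit Ulm invariant, Theorem~\ref{thm.nfn} handles the case of exactly two consecutive ones, and Theorem~\ref{thm.mmnon} produces an explicit witness whenever the hypothesis (i) fails. I will handle the two implications separately.

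\medskip

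\noindent\textbf{(i) $\Rightarrow$ (ii).} Fix a generator tuple $U = (U_{a_1}, \dots, U_{a_m}) \in \mcU$ with $|U_{a_\mu}| = d(a_\mu)$. Recall that $|U_{a_\mu}| = 1$ corresponds precisely to an Ulm invariant $d(a_\mu) = 1$. Under hypothesis (i) there are two subcases. If at most one $a_\mu$ satisfies $|U_{a_\mu}| = 1$, then Theorem~\ref{thm.eintl} applies directly and every $X \in \mathrm{Chinv}(V)$ is hyperinvariant. If there are exactly two indices $s, s+1$ (in the enumeration $a_1 < \cdots < a_m$) with $|U_{a_s}| = |U_{a_{s+1}}| = 1$, then the hypothesis that they correspond to successive integers forces $a_{s+1} = a_s + 1$, which is exactly the setting of Theorem~\ref{thm.nfn}, and the conclusion again follows. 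This exhausts all cases allowed by (i).

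\medskip

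\noindent\textbf{(ii) $\Rightarrow$ (i).} I argue by contraposition: assuming (i) fails, I will produce a characteristic subspace of $V$ that is not hyperinvariant. Negating (i) leaves two possibilities: either (a) there are at least three indices $i_1 < i_2 < i_3$ with $d(i_\nu) = 1$, or (b) there are exactly two indices $i_1 < i_2$ with $d(i_1) = d(i_2) = 1$ but $i_2 \geq i_1 + 2$. In case (a), choose the extremes and set $\rho, \tau$ by $a_\rho = i_1$, $a_\tau = i_3$; then $a_\tau \geq a_\rho + 2$. In case (b), set $a_\rho = i_1$, $a_\tau = i_2$, again with $a_\tau \geq a_\rho + 2$. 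In either case $|U_{a_\rho}| = |U_{a_\tau}| = 1$ and $a_\rho + 1 < a_\tau$, so the hypothesis \eqref{eq.rstm} of Theorem~\ref{thm.mmnon} is met. That theorem then yields a subspace $\langle Y \rangle$ with $Y$ as in \eqref{eq.emalli} which is characteristic but not hyperinvariant, contradicting (ii).

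\medskip

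There is no real obstacle here: both implications reduce to invoking the structural theorems of Section~\ref{sct.chrst}. The only point that requires care is the case analysis in (ii) $\Rightarrow$ (i), where one must verify that the negation of (i) always supplies a pair of indices $a_\rho < a_\tau$ with gap at least two and with $|U_{a_\rho}| = |U_{a_\tau}| = 1$; this follows immediately by taking the smallest and largest among the positions of unit Ulm invariants.
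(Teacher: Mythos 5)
Your proposal is correct and follows essentially the same route as the paper: (i) $\Rightarrow$ (ii) by splitting into the cases covered by Theorem~\ref{thm.eintl} and Theorem~\ref{thm.nfn}, and (ii) $\Rightarrow$ (i) by contraposition, noting that the failure of (i) yields indices satisfying \eqref{eq.rstm} so that Theorem~\ref{thm.mmnon} applies. Your write-up merely makes the case analysis (extremal choice of unit Ulm invariants) more explicit than the paper's terse version.
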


\begin{proof}
Suppose
condition (i)
 is satisfied. If there  exists  at most one Ulm invariant  equal to  $1$
then we can apply Theorem~\ref{thm.eintl}.
If there are exactly two  Ulm invariants
$ d(r) $ and $ d(s)  $
 equal to  $1$  and $ s = r + 1 $,
then we can apply  The\-orem~\ref{thm.nfn}.
If condition (i)
 is  not satisfied
then we have \eqref{eq.rstm} for some $\rho, \tau$.
Thus Theorem~\ref{thm.mmnon} completes the proof.
\end{proof}

\section*{Acknowledgment}
We are indebted to referees for valuable comments and
suggestions.

\end{document}